\newtheorem{assumption}[theorem]{Assumption}
\begin{document}

\markboth{G.Chirco, L. Malag\`o, G. Pistone}{Statisticle Bundle Dynamics for Probability Densities}

%%%%%%%%%%%%%%%%%%%%% Publisher's Area please ignore %%%%%%%%%%%%%%%
%
\catchline{}{}{}{}{}
%
%%%%%%%%%%%%%%%%%%%%%%%%%%%%%%%%%%%%%%%%%%%%%%%%%%%%%%%%%%%%%%%%%%%%

\title{Lagrangian and Hamiltonian Dynamics for Probabilities on the Statistical Bundle}

\author{Goffredo Chirco}

\address{Dipartimento di Fisica Ettore Pancini, Universit\'a di Napoli Federico II\\
\& INFN Sezione di Napoli, Napoli, Italy\\
\email{goffredo.chirco@unina.it} }

\author{Luigi Malag\`o}

\address{Transylvanian Institute of Neuroscience\\
       Cluj-Napoca, Romania\\
malago@tins.ro}

\author{Giovanni Pistone}

\address{de Castro Statistics, Collegio Carlo Alberto,\\
    Torino, Italy.\\
giovanni.pistone@carloalberto.org }

\maketitle

\begin{history}
\received{(Day Month Year)}
\revised{(Day Month Year)}
\end{history}

\begin{abstract}
We provide an Information-Geometric formulation of accelerated natural gradient on the Riemannian manifold of probability distributions, which is an affine manifold endowed with a dually-flat connection. In a non-parametric formalism, we consider the full set of positive probability functions on a finite sample space, and we provide a specific expression for the tangent and cotangent spaces over the statistical manifold, in terms of a Hilbert bundle structure that we call the Statistical Bundle.
In this setting, we compute velocities and accelerations of a one-dimensional statistical model using the canonical dual pair of parallel transports and define a coherent formalism for Lagrangian and Hamiltonian mechanics on the bundle. We show how our formalism provides a consistent framework for accelerated natural gradient dynamics on the probability simplex, paving the way for direct applications in optimization.
\end{abstract}

\keywords{Information Geometry, Statistical Bundle,  Kullback–Leibler Lagrangian, Accelerated Natural Gradient, Variational Approaches to Optimization Methods}

%\providecommand{\main}{..}

%\documentclass[../arxiv.tex]{subfiles}

%\begin{document}

%\providecommand{\main}{..}

%\documentclass[../arxiv.tex]{subfiles}

%\begin{document}

\section{Introduction}

Gradient-based optimization and its application to large-scale statistical inference problems are a very active topic in machine learning~\cite{bottou|curtis|nocedal:18}.  In recent years, following the milestone work of~\cite{nesterov:83}, accelerated gradient methods~\cite{Nesterov:05,Nesterov:07,Nesterov:14} had significant impact in optimization, given their ability to improve their convergence rate compared to gradient-based algorithms, and in certain cases to yield optimal rates~\cite{nemirovsky|yudin:93}.

Despite the large amount of work on acceleration in optimization, and the intuitive explanation that the Nesterov method can be described in terms of momentum, there is the lack of a unifying theoretical framework for the derivations of accelerated methods from a unique underlying principle. Recently, \cite{WibisonoE7351} addressed this issue by proposing a generative approach for accelerated methods, where accelerated optimization is recast in the form of a variational problem. In this setting, the Euler-Lagrange equations derived from a family of \emph{Bregman Lagrangians} provide a continuous time analogue of the typical oracle's gradient descent dynamics which allows to generate a large class of accelerated methods, including non-Euclidean extensions.

In statistics and machine learning, inference procedures are often characterized by the minimization of a loss function defined over parameterized statistical models, accomplished by gradient descent methods. Under common regularity conditions, statistical models admit a manifold structure~\cite{amari:85,amari|nagaoka:2000} and the optimization process often benefits by being set in a Riemannian context~\cite{absil|mahony|sepulchre:2008}. Indeed, the Riemannian natural gradient~\cite{amari:1998natural}, that takes into account the metric of the space for the identification of the direction of steepest descent, has proved to provide benefits in terms of speed of convergence in optimization, compared to the plain vanilla Euclidean gradient.  The generalization of accelerated methods to Riemannian manifolds has been investigated more recently by several authors, including~\cite{liu|shang|cheng|cheng|jiao:17,zhang2018riemannian,ahn|sra:2020}.

In this paper, motivated by the approach developed by~\cite{WibisonoE7351}, suitable for an arbitrary Hessian metric over a convex set in $\reals^N$, and with the general aim to inquire about the relation between the \emph{geometric mechanics} and Information Geometry~\cite{doi:10.1063/1.4984941,leok|zhang:2017,pistone:2018Lagrange, felice2018dynamical},\footnote{The interest of dynamical systems on probability functions has raised in several areas, for example, Compartmental Models, Replicator Equations, Prey-Predator Equations, Mass Action Equations, Differential Games, beside in Optimization Methods and Machine Learning Theory.} we define a theoretical framework for accelerated continuous-time dynamics over statistical manifolds, leading to the acceleration of the natural gradient. Notably, our framework defines Lagrangian and Hamiltonian mechanics of statistical manifolds, starting from generic diverge functions between probability distributions, hence including the Bregman Lagrangian as a special case.

Our construction is not limited to statistical manifolds, but it can be applied to any dually-flat geometry characterizing Hessian manifolds. Specifically, by considering functions defined over the natural parameters of the exponential family, which is a convex set by definition, together with the Fisher information matrix as a metric, we recover as a special case an instance of the framework developed in~\cite{WibisonoE7351}. 

Two specific qualifications characterise our approach.  Information Geometry, as firstly formalized by \cite{amari|nagaoka:2000}, views \emph{parametric} statistical models as a manifold endowed with a Riemannian metric and a family of dual connections, the $\alpha$-connections.  Differently, we consider the full set of positive probability functions on a finite sample space and discuss Information Geometry in the non-parametric geometric language (cf.  \cite{lang:1995,klingenberg:1995}). In data analysis, the non-parametric statistical study of compositional data has been started by \cite{aitchinson:1986}. We use here the simplest instance of non-parametric Information Geometry as it is described in the review papers by \cite{pistone:2013GSI,pistone:2020-NPCS}.%

The second and most qualifying choice, consists in considering Information Geometry as defined on a linear bundle, not just on a manifold of probability densities. In classical mechanics, the study of the evolution of a system requires both position $q$ and velocities $\dot{q}$, or position and conjugate momenta $p$ in the \emph{phase space} description. Lagrangian and Hamiltonian mechanics are defined, respectively, on the tangent and co-tangent bundle of a finite-dimensional Riemannian manifold \cite[Ch.~III-IV]{arnold:1989}.  Similarly, in statistics, we argue that the study of probability evolution is most naturally described by a fiber bundle structure comprising couples of probability densities and associated scores $\velocity q$ ($\log$-derivatives), or equivalently by densities and score conjugate momenta $\eta$. We call such a bundle the Statistical Bundle \cite{pistone:2018Lagrange}. This idea should be compared with the use of the Grassmannian manifold, as defined, for example in \cite{absil|mahony|sepulchre:2008}, to describe the various centering of the space of the sufficient statistics of an exponential family \cite{malago|pistone:2014Entropy, michalek|sturmfels|uhler|zwiernik:2016}.

In the case of strictly positive densities, the affine geometry of the Statistical Bundle is fully characterised by an affine atlas and a couple of dual parallel transports. With that, one easily computes the form of all the relevant second-order quantities which are necessary to define a notion of acceleration for in the space of probability densities.

In this setting, first order differential equations are confirmed to correspond to replicator equations, as discussed, for example, in \cite[\S6.2]{Ay|Jost|Le|Schwachhofer:2017IGbook}. However, we are able to show that also the second-order Euler-Lagrange equation (as well as the Hamiltonian equation) can be expressed as a \emph{system} of replicator equations. In this sense, the statistical bundle framework provides a possible solution to the problem of second-order evolution equation on the simplex, which has been raised in the optimization literature.

The paper is organised as follows.  In \cref{sec:statistical-bundle}, we introduce the non-parametric description of the statistical bundle and the maximal exponential family. We define a convenient \emph{full bundle} extension for this structure, which carries tuples of both exponential and mixture fibers at each point.  In \cref{sec:secondbundle}, we recall the main features of the Hessian geometry of the maximal exponential family. We focus on the second-order geometry, introducing consistent notions of velocity, covariant derivative, and acceleration on higher order statistical bundles.  In section \ref{sec:natural-gradient}, we generalize the computation of the natural gradient to the \emph{Lagrangian} and the \emph{Hamiltonian} function on the full bundle.  Therefore, in \cref{sec:mechanics}, we gathered all the necessary structure to define a mechanics of the probability simplex. We define an action integral in terms of a generic notion on Lagrangian function on the statistical bundle. We can then derive the Euler-Lagrange equation via a standard variational approach on the simplex \cite{pistone:2018Lagrange}.  We define a Legendre transform, hence we derive the Hamilton equations.  As a starting point for our analysis, we look at the dynamics induced by a standard, though \emph{local} here, particle Lagrangian obtained from the \emph{quadratic form} on the statistical bundle, where the role of the point particle is played by a probability density as a point on the statistical manifold.  We take the quadratic particle Lagrangian as a \emph{quadratic} approximation of a Kullback-Leibler (KL) divergence function. We focus on the formal construction of a Lagrangian function from a divergence and we setup the study of the dynamics induced by a family of parameterised KL divergence Lagrangians. Finally, motivated by our interest in applications in optimization, in \cref{sec:jordan} we consider the case of a time-dependent damped extensions of the KL Lagrangian and we apply the Lagrange-Hamilton duality to provide a first realization on the statistical bundle of the variational approach to accelerated optimisation methods recently proposed in \cite{WibisonoE7351}. We end with a brief discussion in \cref{sec:discussion}.  In the appendices we include examples, comments, and additional derivations. Therein, we provide the full analytic solution of the geodesic motion for the free quadratic Lagrangian, complete examples of both quadratic and KL Lagrangian and Hamiltonian flows on the bundle, as well as the explicit expressions of the ODEs systems associated to the dynamics derived in \cref{sec:mechanics} and \cref{sec:jordan}.

\section{Statistical Bundle}
\label{sec:statistical-bundle}

We work on a finite sample space $\Omega$, $\#\Omega=N$ with $x \in \Omega$. However, we are careful in using a general language that could be used in the study of an infinite or continous state space. Probabilities on a finite sample space can be presented as probability functions, that is real non-negative vectors $P = (P(x) \colon x \in \Omega)$ whose components sum to 1, $\sum_x P(x) = 1$. In this presentation, the uniform probability function is $\mu$, $\mu(x) = \frac1N$, $x \in \Omega$. The set of all probability functions is the \emph{probability simplex} $\Delta(\Omega)$, which is also described as the convex set in $\reals^\Omega$ generated by the Dirac functions $\delta_x$, $x \in \Omega$. In this paper, we focus on strictly positive probability functions, that is our base set is the relative interior $\Delta^\circ(\Omega)$ of the probability simplex. A random variable is a real function $f$ on $\Omega$ and its expected value with respect to the probability function $P$ is $\expectat P f =\sum _ x f(x)\, P(x)$.
Equivalently, a probability function $P$ on $\Omega$ is described by its density $p$ with respect to the uniform probability function, $P(x) = p(x) / N$. A random variable is centered with respect to $p$ if $\expectat p f=0$. 

Given a probability density $p$, we shall look systematically to a random variable $f$ as the sum of its $p$-mean value $\expectat p f$ and the fluctuation $(f - \expectat p f)$. We define the entropy function to be $\entropyof p = - \expectat p {\log p}$, so that the entropy of the uniform probability density is 0. The Kullback-Leibler divergence is $D(p,q) = \expectat p {\log\frac p q}$, so that $D(p, 1) = - \entropyof p$.
 We represent the open probability simplex $\Delta^\circ(\Omega)$ as the \emph{maximal exponential family} $\maxexpat \mu$ in the sense that each strictly positive probability density $p > 0$ can be written in exponential form as $p \propto \euler^f$, where $f$ is identified up to a constant. Uniqueness of the exponent can be obtained in at least two ways, both relevant for our construction. For each given reference density $p \in \maxexpat \mu$, considered as a reference state, one can write either
\begin{equation}\label{eq:pistone}
  q(x) = \expof{u(x) - K_p(u)} \cdot p(x) \ , 
\end{equation}
where $\expectat p u = 0$ is a constraint and the normalising constant becomes $K_p(u) = \log \expectat p {\euler^u} = D(p,q)$, or
\begin{equation}\label{eq:landau}
  q(x) = \expof{v(x) + H_p(v)} \cdot p(x) \ ,
\end{equation}
where $ \expectat q v = 0$ is a constraint and the normalizing constant becomes $H_p(v) = - \log \expectat q {\euler^v} = D(q,p)$.

In the first case \eqref{eq:pistone}, the set of all possible $u$'s is the vector space of $p$-centered random variables and the inverse mapping $q \mapsto u = \log q/p - \expectat p {\log q/p}$ provides a chart of $\maxexpat \mu$. In the second case \eqref{eq:landau}, the co-domain for the mapping $q \mapsto v = \log q / p - \expectat q {\log q/p}$ is not a vector space. Notice the equalities $u - v = D(p, q) + D(q,p)$ and $\expectat q u = K_p(u) + H_q(v)$.

The \emph{statistical bundle} with base $\maxexpat \mu$ is
\begin{equation}\label{eq:statisticalbundle}
    \expbundleat \mu = \setof{(q,v)}{q \in \maxexpat \mu, \expectat q v = 0} \ .
\end{equation}
The bundle projection is $\pi(q,v) = q$. The mapping $q \mapsto v = \log q/p - D(q, p)$, uniquely defined by \cref{eq:landau}, provides a section of the statistical bundle. Each fiber $\expfiberat p \mu$ is the vector space of fluctuations with respect to $p$. The statistical bundle is designed to allow for the discussion of the time evolution of curves $t \mapsto \gamma(t) = (q(t), v(t))$ in a space of states involving both probability distributions and fluctuations.

The statistical bundle is a semi-algebraic subset of $\reals^{2N}$, namely the open subset of the $(N-1)$-Grassmannian defined by
\begin{equation*}%\label{eq:statisticalbundle+}
(q,v) \in \expbundleat \mu \Leftrightarrow 
  \begin{cases}
    &\sum_{x\in\Omega} q(x) = N  \ , \\
    &\sum_{x\in\Omega} v(x)q(x) = 0 \ , \\
    &q(x) > 0 \quad x \in \Omega \ .
  \end{cases}
\end{equation*}
In the following, we will retain the manifold structure induced by $\reals^{2N}$ and proceed by adding further structure. %The inner product on the fiber $\expfiberat q \mu$ is defined to be $\scalarat q {v_1}{v_2} = \expectat q {v_1v_2}$.

It will be useful to distinguish between the \emph{exponential} (statistical) bundle $\expbundleat \mu$ and the \emph{mixture} (statistical) bundle denoted by $\mixbundleat \mu$ and introduce a duality pairing. The two bundles are algebraically equal, but they will carry different affine geometries. The duality pairing is defined on each fiber at $q$ by
\begin{equation}
 \mixfiberat q \mu \times \expfiberat q \mu \ni (\eta,v) \mapsto \scalarat q \eta v = \expectat q {\eta v} \ . 
\end{equation}
We will consider a mixture chart defined on $\maxexpat \mu$ by $q \mapsto \frac q p -1 \in \mixfiberat p \mu$. The exponential chart previously defined requires positive densities, while the mixture chart is in fact defined on all random variable whose sum of values equal 1, without any restriction on the sign. The relevance of the joint consideration of the two types of chart is shown by the following computation. If $q$ is represented by $u = \log \frac q p - \expectat p {\log \frac q p}$ and by $v = \frac q p -1$, it holds
\begin{equation*}
\scalarat p u v = \expectat p {\left(\log \frac q p - \expectat p {\log \frac q p}\right)\left(\frac q p -1\right)} = D(q,p) + D(p,q) \ .
\end{equation*}

The geometry of statistical models is frequently identified with a Riemannian geometry whose metric is given by the so-called Fisher metric. Such a Riemannian metric is derived either from the Hessian matrix of the log-likelihood of the model or, in a non parametric way, from the metric of the unit sphere via a square root embedding~\cite{amari|nagaoka:2000}. We discuss here the relation of this approach to our construction of statistical manifold.

Consider the positive quadrant $S_>(2)$ of the sphere of radius 2 of $L^2(\mu)$ and its tangent bundle  $TS_>(2)$. An element $(\rho,w)$ in the bundle is characterised by the equations $\rho(x) > 0$, $\sum_x \rho(x)^2/N = 2^2$, $\sum_x w(x)\rho(x)/N = 0$. In this finite case, the set of positive probability densities $\maxexpat \mu$ is a convex subset of $L^2(\mu)$. An element $(p,v)$ of the tangent bundle $T \maxexpat \mu$ is characterised by the equations $p(x) > 0$, $\sum_x p(x) / N = 1$, $\sum _x v(x) / N = 0$. The tangent mapping of the so-called square embedding $\rho \mapsto \rho^2/4$  identifies the two bundles: 
\begin{equation}\label{eq:spheretosimplex}
  TS_>(2) \ni (\rho,w) \mapsto \left(\frac 14 \rho^2, \frac12 \rho
    w\right) = (q,v) \in T\maxexpat \mu \ .
\end{equation}
The inverse transformation is $(p,v) \mapsto (2\sqrt p,v/\sqrt p)$ so that the inner product of $L^2(\mu)$ restricted to the fiber $T_\rho S_>(2)$ is pushed forward to the (trivial) fiber $T_p\maxexpat \mu$, $\rho = 2 \sqrt p$, as 
\begin{equation*}
\scalarat \mu {\frac {v_1}{\sqrt p}}{\frac {v_2}{\sqrt p}} = \sum _ x \frac {v_1(x) v_2(x)}{p(x)} \frac1N \ ,
\end{equation*}
that is, the Fisher metric. If we now move from the statistical bundle to the tangent bundle by the trivialization mapping
\begin{equation}\label{eq:simplextobundle}
\mixbundleat \mu = \expbundleat \mu \ni (p,u) \mapsto (p,pu) = (p,v) \in T\maxexpat \mu
\end{equation}
the push forward of the Fisher metric to $\expbundleat \mu$ is
\begin{equation*}
\sum _ x \frac {(p(x) v_1(x)) (p(x) v_2(x))}{p(x)} \frac1N = \scalarat p {v_1}{v_2} \ ,
\end{equation*}
that is, the restriction of the inner product of $L^2(p)$. In conclusion, the same metric structure has three different canonical expressions, according to the choice of the bundle among isomorphic expressions, 
\begin{equation*}
 TS_>(2) \leftrightarrow T\maxexpat \mu \leftrightarrow \expbundleat \mu = \mixbundleat \mu \ . 
\end{equation*}
Our choice of the statistical bundle as basic representation is motivated by two arguments. First, the representation on the sphere produces results that do not have a clear statistical interpretation. Second, the description of the affine connections is especially simple in the statistical bundle.

Let us discuss briefly the issue of the parameterization of the statistical bundle. For the purpose of doing computations, it could be convenient to represent probabilities as probability functions in the open probability simplex instead of probability densities with respect to the uniform probability. Moreover, one could restrict to the set of probabilities $\Gamma_{N-1}$ defined by $N-1$ parameters $\theta_j > 0$, $\sum_j \theta_j < 1$. In this presentation the tangent space is the space $\reals^{(N-1)}$. A straightforward computation shows that the Fisher metric is represented in the standard basis by the matrix $I(\bm\theta) = \left(\diagof {\bm\theta} - {\bm\theta} {\bm\theta}^T\right)^{-1}$.

We now proceed to introduce the affine geometry of the statistical bundle. Recall that we look at the inner product on the fibers as a duality pairing between $\mixfiberat q \mu$ and $\expfiberat q \mu$. This point of view allows for a natural definition of a dual covariant structure. Namely, we define two affine transports among the fibers of each of the statistical bundles. See the parametric version in \cite{amari|nagaoka:2000} and the first non-parametric version in~\cite{gibilisco|pistone:98}.

For each random variable $u\in \expfiberat p \mu$, it holds
\begin{equation}
\expectat q { u-\expectat q {u}}=0 \quad \text{and} \quad  \expectat q {\frac{p}{q}\,u} =0,
\end{equation}
so that both $u- \expectat q{u}$ and $\frac{p}{q}\,u$ belong to $\expfiberat q \mu$. This prompts for the following definition.

\begin{definition}
  The \emph{exponential transport} is defined for each $p,q \in \maxexpat \mu$ by 
\begin{equation}
\etransport p q \colon \expfiberat p \mu \to \expfiberat q \mu \ , \quad \etransport p q v = v - \expectat q v \ ,
\end{equation}
while the \emph{mixture transport} is
\begin{equation}
\mtransport p q \colon \mixfiberat p \mu \to \mixfiberat q \mu \ , \quad \mtransport p q \eta = \frac p q \eta  \ .
\end{equation}
\end{definition}

The e-transport and the m-trasport are semi-groups of affine transformations which are compatible with the statistical bundle and are dual of each other with respect to the scalar product on each fiber. In particular, we have $\etransport p q\, \etransport r p= \etransport r q $ and $\mtransport p q\, \mtransport r p= \mtransport r q $, respectively.

The following properties are easily proved by applying the semi-group property of the transports and the definition of pairing $\scalarat q  A B= \expectat q {A\,B}$.

\begin{proposition}
  The two transports defined above are conjugate with respect to the duality pairing,
\begin{equation} \label{eq:transports-duality}
\scalarat q {\mtransport p q \eta} v = \scalarat p \eta {\etransport q p v} \ , \quad \eta \in \mixfiberat p \mu, v \in \expfiberat q \mu \ .  
\end{equation}
Moreover, it holds
\begin{equation}\label{conj-scalarat}
\scalarat q {\mtransport p q \eta} {\etransport p q v} = \scalarat p
\eta v \ , \quad \eta \in \mixfiberat p \mu, v \in \expfiberat p \mu \ .    
\end{equation}
\end{proposition}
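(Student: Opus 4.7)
The plan is to verify both identities by direct computation, then observe that the second is a formal consequence of the first combined with the semigroup property already stated in the excerpt.

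For \eqref{eq:transports-duality}, I would start from the left-hand side and unfold the definition of the mixture transport:
\begin{equation*}
\scalarat q {\mtransport p q \eta} v = \expectat q {\tfrac{p}{q}\,\eta\, v} = \expectat p {\eta\, v} \ ,
\end{equation*}
where the second equality is the elementary change-of-measure identity $\expectat q {(p/q) f} = \expectat p f$ (concretely, $\sum_x (p(x)/q(x)) f(x) q(x)/N = \sum_x f(x) p(x)/N$). Then I would unfold the right-hand side using the exponential transport:
\begin{equation*}
\scalarat p \eta {\etransport q p v} = \expectat p {\eta\,(v - \expectat p v)} = \expectat p {\eta v} - \expectat p v \cdot \expectat p \eta = \expectat p {\eta v} \ ,
\end{equation*}
since the constraint $\eta \in \mixfiberat p \mu$ forces $\expectat p \eta = 0$. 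Equating the two computations gives the desired duality. This is the crucial step; the only thing that could go wrong is a sign or normalization mismatch, but the centering condition on $\eta$ cleanly kills the stray $\expectat p v$ term, which is the whole point.

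For \eqref{conj-scalarat}, I would apply \eqref{eq:transports-duality} with the vector $\etransport p q v \in \expfiberat q \mu$ in place of $v$, obtaining
\begin{equation*}
\scalarat q {\mtransport p q \eta}{\etransport p q v} = \scalarat p \eta {\etransport q p \etransport p q v} \ .
\end{equation*}
By the semigroup property stated just before the proposition, $\etransport q p \etransport p q = \etransport p p$, and $\etransport p p v = v - \expectat p v = v$ because $v \in \expfiberat p \mu$ is $p$-centered. Therefore $\scalarat p \eta {\etransport q p \etransport p q v} = \scalarat p \eta v$, which is the claim.

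I do not foresee a real obstacle: the main content of the proposition is that the two centering operations built into the definitions of the bundles (subtracting a mean for $e$, and rescaling by a density ratio for $m$) are adjoint with respect to the fiberwise pairing, and this is essentially a one-line change-of-measure calculation. The only mild care point is to keep track of \emph{which} base point each fiber lives over, so that the centering constraint $\expectat p \eta = 0$ (rather than $\expectat q \eta = 0$) is applied in the right place when canceling the stray term $\expectat p v \cdot \expectat p \eta$.
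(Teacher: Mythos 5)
Your proof is correct and follows essentially the same route the paper indicates: the first identity is the change-of-measure computation from the definition of the pairing (with the centering condition $\expectat p \eta = 0$ killing the stray term), and the second follows from the first via the semigroup property $\etransport q p \etransport p q = \etransport p p = \mathrm{id}$ on $\expfiberat p \mu$. No gaps.
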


We now use these notions of transport to define a special affine atlas of charts, which will then be used to introduce the \emph{affine manifold} structure providing the set-up of Information Geometry in this setting \cite{pistone:2020-NPCS}.

\begin{definition}
The \emph{exponential atlas} of the exponential statistical bundle $\expbundleat \mu$ is the collection of charts given for each $p \in \maxexpat \mu$ by
 \begin{equation}\label{eq:expcharts1}
   s_p \colon \expbundleat \mu \ni (q,v) \mapsto (s_p(q),\etransport q p v) \in \expfiberat p \mu \times \expfiberat p \mu \ , 
 \end{equation}
where 
\begin{equation}\label{eq:expcharts2}
  s_p(q) = \log \frac qp - \expectat p {\log \frac qp} \ .  
\end{equation}
\end{definition}

As $s_p(p,v) = (0,v)$, we say that $s_p$ is the chart \emph{centered
  at $p$}. If $s_p(q) = u$, from \cref{eq:expcharts2} follows the exponential form of $q$ as a density with respect to $p$,
namely $q = \euler^{u - \expectat p {\log \frac qp}} \cdot p$. As
$\expectat \mu q = 1$, then
$1 = \expectat p {\euler^{u - \expectat p {\log \frac pq}}} =
\expectat p {\euler^u} \euler^{-\expectat p {\log \frac pq}}$, so that
the \emph{cumulant function} $K_p$ is defined on $\expfiberat p \mu$
by
\begin{equation}
  K_p(u) = \log \expectat p {\euler^u} = \expectat p {\log \frac pq} = D(p,q) \ ,
\end{equation}
that is, $K_p(u)$ is the expression in the chart at $p$ of Kullback-Leibler divergence of $q \mapsto D(p,q)$, and we can write
\begin{equation}
q = \euler^{u - K_p(u)} \cdot p = e_p(u) \ .  
\end{equation}

In conclusion, the \emph{patch centered at $p$} is

\begin{equation}
 s^{-1}_p = e_p \colon (\expfiberat p \mu)^2 \ni (u,v) \mapsto (e_p(u), \etransport p {e_p(u)} v) \in \expbundleat \mu \ .
\end{equation}

In statistical terms, the random variable $\logof{q/p}$ is the relative point-wise information about $q$ relative to the reference $p$, while $s_p(q)$ is the deviation from its mean value at $p$.

The expression of the other divergence in the chart centered at $p$ is 
\begin{equation}
D(q,p) = \expectat q {\log \frac qp} = \expectat q {u - K_p(u)} = \expectat q u - K_p(u) \ .
\end{equation}

\begin{definition}
The \emph{dual  atlas} of the mixture statistical bundle $\expbundleat \mu$ is the collection of charts given for each $p \in \maxexpat \mu$ by
 \begin{equation}\label{eq:mixcharts1}
   \eta_p \colon \mixbundleat \mu \ni (q,w) \mapsto \left(s_p(q),\mtransport q p w\right) \in \expfiberat p \mu \times \mixfiberat p \mu \ . 
 \end{equation}
\end{definition}

We say that $\eta_p$ is the chart \emph{centered at $p$}. The \emph{patch centered at $p$} is
\begin{equation}
 \eta^{-1}_p \colon \expfiberat p \mu \times \mixfiberat p \mu \ni (u,v)
 \mapsto \left(e_p(u), \mtransport p {e_p(u)} v\right)  \in \mixbundleat \mu \ .
\end{equation}

we will see that the affine structure is defined by the affine atlases.

\begin{remark}As an aside, we underline that there is a further structure of interest, namely the \emph{Hilbert bundle}. This is obtained by considering each fiber as an expression of the tangent bundle and taking the inner product $\scalarat p \cdot \cdot$ as a Riemannian metric. In this case, the relevant Levi-Civita connection associated to the metric induces a parallel transport and a geometry on the bundle which is not affine in our sense. The push forward of the Riemannian parallel transport can be computed explicitly so to have the isometric property $\scalarat q v w = \scalarat p {\transport q p v}{\transport q p w}$ \cite{pistone:2020-NPCS}. The notion of Hilbert bundle was introduced originally by \cite{kumon|amari:1988} and developed by \cite{amari:87dual} as a general set-up for the study of sub-models. See also the discussion in the monograph by \cite[\S~10.1-2]{kass|vos:1997}. We will not make any use of this in the following.
\end{remark}

In some cases, especially in discussing higher-order geometry, we will need bundles whose fibers are the product of multiple copies of the mixture and exponential fibers. As a first example, the \emph{full bundle} is
\begin{equation*}
  \fullbundleat \mu = \setof{(q,\eta,w)}{q \in \maxexpat \mu, \eta \in
    \mixfiberat q \mu, w \in \expfiberat q \mu} \ .
\end{equation*}
In general, $\prescript{h}{}\!S^k$ will denote $h$ mixture factors and $k$ exponential factors. Note $\prescript{1}{}\!S^0 = \prescript{*}{}\!S$ and $\prescript{0}{}\!S^1 = S$.

\section{Hessian Structure and Second-Order Geometry}
\label{sec:secondbundle}

In the construction of the statistical bundle given above, we were
inspired by the original Amari's Information Geometry
\cite{amari|nagaoka:2000} in that we have shown that the statistical
bundle is an extension of the tangent bundle of the Riemannian
manifold whose metric is the Fisher metric and, moreover, we have
provided a system of dually affine parallel transports. In this
section, we proceed by introducing a further structure, namely, we
show that the base manifold $\maxexpat \mu$ is actually a Hessian
manifold with respect to any of the convex functions
$K_p(u) = \log \expectat p {\euler^u}$, $u \in \expfiberat p \mu$, see \cite{shima:2007}. Many useful computations in
classical Statistical Physics and, later, in Mathematical Statistics,
have been actually performed using the derivatives of a master convex
function, that is, using the Hessian structure.

The connection is established by the fact that the $n$-th differential of $K_p(u)$ at $u$ in the direction $h \in \expfiberat p \mu$, given by the $n$-linear continuous form $D^n K_p(u)$ applied to $\underbrace{(h, \dots, h)}_{n \text{ times}}$, is the $n$-th cumulant of $\etransport p {e_p(u)} h$ under the probability density $q$, see \cite[Proposition 2.4]{pistone|sempi:95}. In particular, the following equations can be easily derived
\begin{gather}
  {D K_p(u)[h]} = \expectat {e_p(u)} h \ , \label{eq:K1}\\
%  \etransport p {e_p(u)} h = h - d K_p(u) [h] \ ; \label{eq:K2}\\
 D^2K_p(u)[h_1,h_2] = \scalarat {e_p(u)}{\etransport p {e_p(u)} h_1}{\etransport p {e_p(u)} h_2} \ , \label{eq:K3}\\    
    D^3K_p(u)[h_1,h_2,h_3] = 
    \expectat {e_p(u)} {(\etransport p {e_p(u)} h_1)(\etransport p {e_p(u)} h_2)(\etransport p {e_p(u)} h_3)} \ , \label{eq:K4}
  \end{gather}
%where we use the notation $D K_p(u)[h]=\frac{d}{dt}K_p(u+th)\vert_{t=0}$ to indicate the directional derivative of $K_p(u)$ along the fiber $\expfiberat p \mu$.
For Hessian manifolds the second-order geometry gets fully encoded in the first three cumulants. In the first cumulant the directional derivative is computed by an expected value; the second cumulant defines a metric bilinear form and allows to compute inner products as covariances; the third cumulant directly relates to the computation of the covariant derivative for Hessian manifolds.
With such computational tools, we can proceed to discuss the kinematics of the
statistical bundles.

\subsection{Velocities and Covariant Derivatives}

Let us compute the expression of the velocity at time $t$ of a smooth curve
\begin{equation}
  \label{eq:3}
t \mapsto \gamma(t) = (q(t), w(t)) \in \expbundleat \mu  
\end{equation}
in the exponential chart centered at $p$. The expression of the curve is
\begin{equation}
\gamma_{p}(t) = \left(s_p(q(t)),\etransport {q(t)}p w(t)\right) \ ,  
\end{equation}
and hence we have, by denoting the ordinary derivative of a curve in $\reals^N$ by the dot,
\begin{multline}\label{eq:deriv1}
  \derivby t  s_p(q(t)) = \derivby t \left(\log \frac {q(t)} p - \expectat p {\log \frac {q(t)} p}\right) = \frac {\dot q(t)}{q(t)} - \expectat p {\frac {\dot q(t)}{q(t)}} = \\ \etransport {q(t)}{p} \frac {\dot q(t)}{q(t)} = \etransport {q(t)} p \derivby t \log q(t) \ ,
\end{multline}
and 
\begin{equation}\label{eq:deriv2}
  \derivby t  \etransport {q(t)} p w(t) = \derivby t \left(w(t) - \expectat p {w(t)}\right) = \dot w(t) - \expectat p {\dot w(t)} \ . 
\end{equation}

There is a clear advantage in expressing the tangent at each time $t$ in the moving frame centered at the position $q(t)$ of the curve itself. Because of that, we define the \emph{velocity} of the curve
\begin{equation}\label{eq:expcurve}
t \mapsto q(t) = \euler^{u(t) - K_p(u(t))}\cdot p \ , \quad u(t) = s_p(q(t)) \ , 
\end{equation}
to be
\begin{multline}
\label{eq:exponential-derivative}
\velocity q(t) =  \etransport p {q(t)} \derivby t s_p(q(t)) = \dot u(t) - \expectat {q(t)}{\dot u(t)} = \dot u(t) - D K_p(u(t))[\dot u(t)] = \\ 
\derivby t \log q(t) = \frac {\dot q(t)}{q(t)} \ . 
    \end{multline}

It follows that $t \mapsto (q(t),\velocity q(t))$ is a curve in the statistical bundle whose expression in the chart centered at $p$ (the reference density in \cref{eq:expcurve}) is $t \mapsto (u(t),\dot u(t))$. In fact,
\begin{equation}
\etransport {q(t)} p \left(\dot u(t) - D K_p(u(t))[\dot u(t)]\right) = \dot u(t) \ .
\end{equation}
%Throughout the paper we will use the notation $\Derivby t q(t) = \velocity q(t)$.

The mapping $q \mapsto (q,\velocity q)$ is a lift of the curve to
the statistical bundle.

%\begin{remark}[Statistical meaning of $\velocity q$]
The velocity as defined above is nothing else but the \emph{score
  function} of a one-dimensional parametric statistical model, see,
for example, the contemporary textbook by \cite{efron|hastie:2016}, \S
4.2.
% The score function was introduced by R.~Fisher to support the following computation. If $f$ is any random
% variable, then the variation of the expectation is
% \begin{equation*}
%   \derivby t \expectat {q(t)} f = \scalarat {q(t)} {f - \expectat {q(t)} f}{\velocity q(t)} \ .
% \end{equation*}
% Moreover, the variance of the score function, that is, the squared
% norm with respect to $q(t)$ of the velocity $\velocity q(t)$, is
% classically known in Statistics as the \emph{Fisher information} at
% $t$ of the statistical model $t \mapsto q(t)$. Namely,
% \begin{equation*}
%   I(t) = \int (\velocity q(t))^2 \ q(t)d\mu = \int \frac{\dot
%     q(t)^2}{q(t)} \ d\mu \ .
% \end{equation*}
% In turn, Schwartz inequality  applied to the two equations above
% produces the the \emph{Cramer-Rao bound}
% \begin{equation*}
%   I(t)^{-1} \leq \left( \derivby t \expectat {q(t)} f \right)^{-2}
%   \varat{q(t)}{f} \ .  
% \end{equation*}

% \end{remark}

Let us turn to the interpretation of the second component in
\cref{eq:deriv2}. Given the exponential parallel transport, we define
a \emph{covariant derivative} by setting
\begin{equation}\label{eq:mixture-derivative}
\Derivby t w(t) = \etransport p {q(t)} \derivby t \etransport {q(t)} p
w(t) = \etransport p {q(t)} \Big( \dot w(t) - \expectat p {\dot w(t)}\Big) = \dot
w(t) - \expectat {q(t)} {\dot w(t)} \ .  
\end{equation}
Throughout the paper, the notation $\,\Derivby t \,$ denotes  the
covariant time derivative in a given transport or connection, whose choice will depend on the context.

Let us do the computation in the \emph{dual bundle}. The curve now is
$\zeta(t) = (q(t),\eta(t))$ and the expression of the second component
is $\mtransport {q(t)} p \eta(t) = \frac {q(t)} p \eta(t)$. This gives
\begin{equation}
\derivby t \mtransport {q(t)} p \eta(t) = \derivby t \frac {q(t)} p
\eta(t) = \frac 1p\left(\dot q(t) \eta(t) + q(t)\dot \eta(t)\right) \ ,
\end{equation}
which, in turn, gives the dual covariant derivative
\begin{equation}
\Derivby t \eta(t) = \mtransport p {q(t)} \derivby t \mtransport {q(t)} p
\eta(t) = \frac {p}{q(t)} \frac 1p \left(\dot q(t) \eta(t) + q(t)\dot \eta(t)\right)
= \velocity q(t) \eta(t) + \dot \eta(t) \ .\end{equation}

The couple of covariant derivatives of
\cref{eq:exponential-derivative,eq:mixture-derivative} are compatible
with the duality pairing, as the following proposition shows.

\begin{proposition}[Duality of the covariant derivatives]\label{prop:covariantduality}
For each smooth curve in the full statistical bundle,
\begin{equation*}
  t \mapsto (q(t),\eta(t),w(t)) \in \fullbundleat \mu \ ,
\end{equation*}
it holds
\begin{equation}  \label{eq:d-inner}
  \derivby t \scalarat {q(t)} {\eta(t)}{w(t)} = 
\scalarat {q(t)} {\Derivby t \eta(t)}{w(t)} + \scalarat {q(t)}
{\eta(t)} {\Derivby t w(t)} \ .
\end{equation}
\end{proposition}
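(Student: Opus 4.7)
The plan is to reduce the identity to the elementary Leibniz rule for a bilinear form on a fixed vector space, by exploiting the affine transports. The key input is the preservation identity \cref{conj-scalarat}, which states that the duality pairing is invariant under simultaneous $m$- and $e$-transport. Once the full time-dependence of the base point has been pushed out of the pairing, only ordinary calculus in a fixed vector space remains.

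Concretely, fix any reference $p \in \maxexpat \mu$ and set $\tilde\eta(t) = \mtransport{q(t)}{p}\eta(t) \in \mixfiberat p \mu$ and $\tilde w(t) = \etransport{q(t)}{p}w(t) \in \expfiberat p \mu$. The semi-group property of the transports gives $\eta(t) = \mtransport p {q(t)} \tilde\eta(t)$ and $w(t) = \etransport p {q(t)} \tilde w(t)$, so \cref{conj-scalarat} yields
\[
\scalarat{q(t)}{\eta(t)}{w(t)} = \scalarat{p}{\tilde\eta(t)}{\tilde w(t)}.
\]
The right-hand side is a bilinear pairing on a fixed vector space, whose derivative is $\scalarat{p}{\dot{\tilde\eta}}{\tilde w} + \scalarat{p}{\tilde\eta}{\dot{\tilde w}}$ by the ordinary product rule. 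To match this with the statement, observe that the very definitions of the covariant derivatives give $\Derivby t \eta(t) = \mtransport{p}{q(t)}\dot{\tilde\eta}(t)$ and $\Derivby t w(t) = \etransport{p}{q(t)}\dot{\tilde w}(t)$; applying \cref{conj-scalarat} a second time rewrites each of the two summands as a pairing at $q(t)$, yielding the claim. The conclusion is manifestly independent of the auxiliary choice of $p$.

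I do not foresee a serious obstacle; the argument is essentially bookkeeping once \cref{conj-scalarat} is in hand. A direct alternative would differentiate $\derivby t \expectat{q(t)}{\eta(t)w(t)}$ by ordinary calculus, using $\velocity q = \dot q / q$ to rewrite the $\dot q$-term as $\expectat{q(t)}{\velocity q\,\eta\,w}$, and then compare the resulting three summands $\expectat{q(t)}{\velocity q\,\eta\,w + \dot\eta\,w + \eta\,\dot w}$ with the expanded right-hand side obtained from $\Derivby t \eta = \velocity q\,\eta + \dot\eta$ and $\Derivby t w = \dot w - \expectat{q(t)}{\dot w}$; here the mean-value correction in the latter is absorbed by the centering condition $\expectat{q(t)}{\eta(t)} = 0$ built into $\mixfiberat{q(t)}\mu$. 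The only delicate point in either approach is keeping careful track of the direction of the transports and of the base point at which the centering constraint applies.
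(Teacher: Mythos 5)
Your main argument is correct and is essentially the paper's own proof: transport both factors to a fixed fiber at $p$, apply the ordinary product rule there, and move back to $q(t)$ via the duality/invariance of the pairing under the pair of transports, recognizing the covariant derivatives from their definitions. The direct alternative you sketch (differentiating $\expectat{q(t)}{\eta w}$ and using $\expectat{q(t)}{\eta}=0$ to absorb the centering term) also checks out, but the transport route is the one the paper takes.
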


\begin{proof}
  The proof is a simple computation based on \cref{{eq:transports-duality}}.
  \begin{multline*}
  \derivby t \scalarat {q(t)} {\eta(t)}{w(t)} = \derivby t \scalarat {p} {\mtransport {q(t)} p \eta(t)}{\etransport {q(t)} p w(t)} = \\
  \scalarat {p} {\derivby t \mtransport {q(t)} p \eta(t)}{\etransport {q(t)} p w(t)} + \scalarat {p} {\mtransport {q(t)} p \eta(t)}{\derivby t \etransport {q(t)} p w(t)} = \\
  \scalarat {q(t)} {\mtransport p {q(t)} \derivby t \mtransport {q(t)} p \eta(t)}{w(t)} + \scalarat {q(t)} {\eta(t)}{\etransport p {q(t)}\derivby t \etransport {q(t)} p w(t)} = \\
\scalarat {q(t)} {\Derivby t  \eta(t)}{w(t)} + \scalarat {q(t)}
{\eta(t)} {\Derivby t  w(t)} \ .
\end{multline*}
\end{proof}

Let us now look at the duality pairing
$(\square,\lozenge) \mapsto \scalarat q \square \lozenge$ as an inner
product $(\bigcirc,\bigcirc) \mapsto \scalarat q \bigcirc \bigcirc$ on
the Hilbert space $L^2_0(q)$. As topological vector spaces, we can use
the identification $L^2_0(q) = \mixfiberat q \mu = \expfiberat q \mu$,
so that we can consider the full bundle as an Hilbert bundle. Let be
given a smooth curve in such a bundle,
$t \mapsto (q(t),\alpha(t),\beta(t))$. Because now the two statistical
bundles are identified, we are bound to provisionally use different
notations for the two covariant derivatives.

By using the symmetry, we get
\begin{multline*}
  \derivby t \scalarat {q(t)} {\alpha(t)}{\beta(t)} = 
\scalarat {q(t)} {\mDerivby t \alpha(t)}{\beta(t)} + \scalarat {q(t)}
{\alpha(t)} {\eDerivby t \beta(t)} = \\
\qquad \quad \scalarat {q(t)} {\eDerivby t \alpha(t)}{\beta(t)} + \scalarat {q(t)}
{\alpha(t)} {\mDerivby t \beta(t)} 
= \\
\scalarat {q(t)} {\oDerivby t \alpha(t)}{\beta(t)} + \scalarat {q(t)}
{\alpha(t)} {\oDerivby t \beta(t)} \ ,
\end{multline*}
where
\begin{equation*}
\oDerivby t = \frac12 \left(\mDerivby t + \eDerivby t\right) \ .
\end{equation*}

Up now, we have defined the following derivation operators on the
statistical bundles:

\begin{enumerate}
\item A velocity $\velocity q(t) = \derivby t \log q(t)$, which is the expression in the moving frame of the derivative.
\item An exponential covariant derivative $\Derivby t w(t) = \eDerivby
  t w(t) = \etransport p {q(t)} \derivby t \etransport {q(t)} p w(t)$.
\item A mixture covariant derivative, $\Derivby t \eta(t) = \mDerivby t
  \eta(t) = \mtransport p {q(t)} \derivby t \mtransport {q(t)} p
  \eta(t)$.
\item A Hilbert covariant derivative
  \begin{equation*}
    \oDerivby t \alpha(t) = \frac12 \left(\mDerivby t \alpha(t) +
      \eDerivby t \alpha(t) \right) = 
    \dot\alpha(t) - \frac12 \expectat{q(t)}{\dot\alpha(t)} + \frac12
    \velocity q(t) \alpha(t) \ ,
  \end{equation*}
\end{enumerate}
corresponding to the canonical Riemannian (or Levi-Civita) connection on the Hilbert bundle.

\begin{remark}
We have used here a presentation based on one-dimensional
statistical models. From the differential geometry point of view is
more common to define covariant derivation on a vector field. We briefly comment about this issue below.

Given two smooth section $X, Y$ of the statistical bundle, that
is two differentiable mappings $X, Y \colon \maxexpat \mu \to \reals^N$, such
that for all $q$ it holds $\expectat q {X(q)} = \expectat q {Y(q)} =
0$, the covariant derivative is defined by
\begin{equation*}
  D_Y X (q) = \left. \Derivby t X(q(t))\right|_{t=0} \quad \text{for
    $q(0)=q$ and $\velocity q(0) = Y(q)$}. 
\end{equation*}

A detailed discussion of the geometry associated to our setting should
include, for example, the computation of the Christoffel coefficients
and the curvature of each of the three connections we have
introduced. Some of these computations are not really relevant for our main
goal, that is, the foundations of the mechanics of the statistical
bundle. Others are probably useful and interesting.

As an example, let us verify that the Hilbert connection defined above is the unique Levi-Civita connection. We shall check that the connection is torsion-free, that is $D_Y X - D_X Y = [X,Y]$, where $[X,Y]$ indicates the commutator of vector fields in the Hilbert bundle modeled on the Riemannian connection of the positive sphere.

We have, for each $\velocity q = Y(q)$ and $q(0) = q$, that
\begin{multline*}
  D_YX(q(t)) = \derivby t X(q(t)) - \frac12 \expectat {q(t)} {\derivby
    t X(q(t))} + \frac12 \velocity q(t) X(q(t)) = \\
  DX(q(t)) [\dot q(t)] - \frac12 \expectat {q(t)} {DX(q(t)) [\dot q(t)]} +
  \frac12 X(q(t)) Y(q(t)) = \\
    q(t)\, DX(q(t)) [Y(q(t))] - \frac12 \expectat {q(t)} {q(t) DX(q(t)) [Y(q(t))]} +
  \frac12 X(q(t)) Y(q(t)) \ .
\end{multline*}

The form of the Hilbert covariant derivative in terms of ordinary
derivatives of fields is
\begin{equation*}
  D_X(q) = q\,DX(q)[Y(q)] - \frac12 \expectat q {q\,DX(q)[Y(q)]} + \frac12
  X(q)Y(q) \ .
\end{equation*}

It follows that the bracket is
\begin{equation*}
  [X,Y](q) = D_YX(q) - D_XY(q) = q\,DX(q)[Y(q)] - q\,DY(q)[X(q)] \ . 
\end{equation*}
In fact, the expectation term is zero because $\expectat q {[X,Y](q)}
= 0$.
\end{remark}

%\subsection{Higher order  statistical bundles and accelerations}
%\label{sec:second-stat-bundle}
We define the \emph{second statistical bundle} to be
\begin{equation}
\exptwobundleat \mu = \setof{(q,w_1,w_2,w_3)}{(q \in \maxexpat \mu,w_1,w_2,w_3 \in \expfiberat q \mu} \ ,
\end{equation}
with charts centered at each $p \in \maxexpat \mu$ defined by
\begin{equation}
s_p(q,w_1,w_2,w_3) = \left(s_p(q),\etransport q p w_1,\etransport q p w_2,\etransport q p w_3\right) \ .  
\end{equation}

The second bundle is an expression of the tangent bundle of the
exponential bundle. For each curve $t \mapsto \gamma(t) =
(q(t),w(t))$ in the statistical bundle, we define its \emph{velocity
  at $t$} to be
\begin{equation}
  \velocity \gamma (t) =  \left(q(t),w(t),\velocity q(t),\Derivby t w(t)\right) \ ,
\end{equation}
because $t \mapsto \velocity \gamma(t)$ is a curve in the second
statistical bundle and that its expression in the chart at $p$ has the
last two components equal to the values given in \cref{eq:deriv1} and
\cref{eq:deriv2}, respectively. The corresponding notion of gradient
will be discussed in the next section. %\cref{sec:natural-gradient}. 

In particular, for each smooth curve $t \mapsto q(t)$, the velocity of
its  lift $t \mapsto \gamma(t) = (q(t),\velocity
q(t))$ is 
\begin{equation}
\velocity \chi(t) = \left(q(t),\velocity q(t), \velocity q(t), \acceleration q(t)\right) \ ,  
\end{equation}
where the \emph{acceleration} $\acceleration q(t)$ at $t$ is 
\begin{equation}\label{eq:acceleration}
\acceleration q(t) = \Derivby t \velocity q(t) = \derivby t \frac{\dot q(t)}{q(t)}   - \expectat {q(t)} {\derivby t \frac{\dot q(t)}{q(t)}} = \frac {\ddot q(t)}{q(t)} - \Big(\velocity q(t)^2 - \expectat {q(t)} {\velocity q(t)^2}\Big) \ .\end{equation}
Notice that the computations above are performed in the embedding
space.

The acceleration has been defined using the transports. Indeed, the connection
here is defined by the transports $\etransport p q$, an approach that seems natural from the probabilistic point of view,
cf. \cite{gibilisco|pistone:98}. The
non-parametric approach to Information Geometry allows to define naturally a dual
transport, hence the dual connection of
\cite{amari|nagaoka:2000}.

The acceleration defined above has the one-dimensional exponential
families as (differential) geodesics. Every exponential (Gibbs) curve
$t \mapsto q(t) = e_p(t u)$ has velocity
$\velocity q(t) = u - D K(t u)[u]$, so that  the acceleration is
$\acceleration q(t) = 0$. Conversely, if one writes $v(t) = \log q(t)$, then
\begin{equation*}
 0 = \acceleration q(t) = \ddot v(t) + \expectat {q(t)} {\ddot v(t)} \ ,
\end{equation*}
so that $v(x;t) = t v(x) + c(t)$.

\begin{example}\label{ex:chi-trick}
Let us discuss a representation of the acceleration that does not
involve the construction of a second-order bundle. Consider the curve $t \mapsto q(t)$ and its lift $t \mapsto
(q(t),\velocity q(t)) \in \expbundleat \mu$. From the retraction $(q,\velocity q) \rightarrow (q,\chi)$, we can define a new curve 
\begin{equation*}
  t \mapsto \chi(t) = e_{q(t)}(\velocity q(t)) = \euler^{\velocity
    q(t) - K_{q(t)}(\velocity q(t))} \cdot q(t) \ ,
\end{equation*}
such that $\velocity q(t) = s_{q(t)}(\chi(t))$.
% in such a way the mapping $(q,\velocity q) \leftrightarrow (q,\chi)$
% is 1-to-1. Precisely, $\chi(t) = e_{\velocity q(t)}$ \luigi{shouldn't the previous be replaced by $e_{q(t)}(\velocity q(t))$. There is a repetition, so we can maybe reformulate?} and $\velocity
% q(t) = s_{q(t)}(\chi(t))$.

Let us compute the velocity of $\chi$.
\begin{multline*}
  \velocity \chi(t) = \derivby t  \log \chi(t) = \derivby t \Big (   \velocity q(t) - K_{q(t)}(\velocity q(t)) + \log q(t) \Big)
  = \\
   % \derivby t \velocity q(t) - \left( \scalarat {q(t)}
  % {\Grad K_{q(t)}(\velocity q(t))}{\velocity q(t)} + \scalarat {q(t)}
  % {\Grad_\text{e} K_{q(t)}(\velocity q(t))}{\Derivby t \velocity q(t)} \right) + \velocity q(t)
  % = \\
  % \frac {\ddot q(t)}{q(t)} - (\velocity q(t))^2 - \scalarat {q(t)}
  % {e^{\velocity q(t)-K_{q(t)}(\velocity q(t))} -1 -  \velocity q(t)}{\velocity q(t)} - \scalarat {q(t)}
  % {e^{\velocity q(t)-K_{q(t)}(\velocity q(t))} -1}{\acceleration q(t)}  + \velocity q(t)
  % = \\
  % \acceleration q(t) + \velocity q(t) - \scalarat {q(t)}
  % {e^{\velocity q(t)-K_{q(t)}(\velocity q(t))} -1 }{\acceleration q(t) + \velocity q(t)}
 \acceleration q(t) +  \expectat {q(t)} {\derivby t \velocity
   q(t)} - \derivby t K_{q(t)}(\velocity q(t)) + \velocity q(t)=
  \acceleration q(t) + \velocity q(t) + c(t) \ .
\end{multline*}
where $c(t)$ is a scalar. That is, $\velocity \chi(t)$ and
$\acceleration q(t) + \velocity q(t)$ differ by a scalar, in particular,
%
% \luigi{we should add some derivations above, since some derivation are missing. Goffredo mentioned some material appears later on. Check this.}
% Now, $\velocity \chi(t)$ has zero expected value wrt $\chi(t)$ while $\acceleration q(t) + \velocity q(t)$ has zero expected value wrt $q(t)$, so that
\begin{equation*}
  \expectat {q(t)} {\velocity \chi(t)} + \expectat {\chi(t)}
  {\acceleration q(t) + \velocity q(t)} = 0 \ .
\end{equation*}

In conclusion, the following representation of the acceleration $\acceleration q$ in terms of the velocities $\velocity \chi$ and $\velocity q$ holds true:
\begin{equation}\label{eq:acctovel}
   \acceleration q = \etransport {\chi} {q} 
   \velocity \chi - \velocity q \quad \text{and} \quad  \velocity \chi = \etransport {q} {\chi} \left(\acceleration q + \velocity q\right)  \ .
  \end{equation}

  In a chart centered at $p$, we have $q(t) = e_p(u(t))$,
  $\velocity q(t) = \etransport p {q(t)} {\dot u(t)}$, so that
  $\chi(t)/p \propto \dot u(t) + u(t)$. In particular, in the case of
  an exponential model, $u(t) = tu$ and $\chi(t) = q(t+1)$, a property
  that is equivalent to the geodesic property
  $\acceleration q(t) = 0$.
\end{example}

We can also define other types of acceleration. In fact, we have three
different interpretation of the lifted curve, namely, we can
consider $t \mapsto (q(t),\velocity q(t))$ as a curve in the
statistical bundle 
$\expbundleat \mu$, or, a curve in the dual bundle $\mixbundleat
\mu$, or, a curve in the Hilbert bundle. Each of these frameworks
provides a different derivation, hence, a different acceleration.

We have already defined the \emph{exponential acceleration} $\eacc q(t) = \acceleration
q(t)$. Further, we can define the \emph{mixture acceleration} as
\begin{equation}\label{eq:macc}
\macc q(t) = \mDerivby t \velocity q(t) = \mtransport p {q(t)} \derivby t  \mtransport {q(t)} p
\velocity q(t) = \ddot q(t)/q(t)
\end{equation}
and the \emph{Riemannian
  acceleration} by
\begin{equation}\label{eq:0acc}
  \acc q(t) = \frac12 \left(\eacc q(t) + \macc q(t)\right) = \frac {\ddot q(t)}{q(t)} - \frac12\left(\left(\frac {\dot q(t)}{q(t)}\right)^2 - \expectat {q(t)}{\left(\frac {\dot q(t)}{q(t)}\right)^2}\right) \ ,
\end{equation}
In the review papers by \cite{pistone:2013GSI,pistone:2020-Pescara}, the
various accelerations are used to derive the relevant Taylor
formul\ae \, and the relevant Hessians. Moreover, it is shown that
the Riemannian acceleration can be derived using a family of isometric transport on the Hilbert bundle. Here, we will be mostly interested in the
mechanical interpretation of the acceleration. 

\section{Natural Gradient}
\label{sec:natural-gradient}

In this section we generalize the (non-parametric) natural gradient to the statistical bundles. Let us first recall the definition we are going to generalize. Given a 
  scalar field $F \colon \maxexpat \mu \to \reals$ the natural
  gradient is the section $q \mapsto \Grad F(q)$ of the dual 
  bundle $\mixbundleat \mu$ such that for all smooth curve
  $t \mapsto q(t) \in \maxexpat \mu$ it holds
\begin{equation}\label{eq:naturalgradient}
  \derivby t F(q(t)) = \scalarat {q(t)} {\Grad F(q(t))}{\velocity
    q(t)} \ .
\end{equation}

The natural gradient can be computed in some cases without recourse to the computation in charts, for example, 
\begin{multline}\label{eq:grad-entropy}
    \derivby t \entropyof{q(t)} = -\derivby t \expectat 1 {q(t) \log q(t)} = -\expectat 1 {\dot q(t)(\log q(t) + 1)} = \\  - \expectat {q(t)} {\log q(t) \velocity q(t)} = \scalarat {q(t)} {- \log q(t) - \entropyof {q(t)}}{\velocity q(t)} \ .
\end{multline}
In general, the natural gradient could be expressed in charts as a function of the ordinary gradient $\nabla$ as follows. In the generic chart at $p$, with $q = e_p(u)$ and $F(q) = F_p(u)$, it holds 
\begin{multline}
  \label{eq:naturalgradientinchart}
  \scalarat {q(t)} {\Grad F(q(t))}{\velocity
    q(t)} = \derivby t F(q(t)) = \derivby t F_p(u(t)) = D F_p(u(t))[\dot u(t)] = \\
  D F_p(u(t))[\etransport {q(t)} p \velocity q(t)] = \scalarat {p} {p^{-1} \nabla F_p(u(t)) }{\etransport {q(t)} p \velocity q(t)} = \\ \scalarat {q(t)} {\mtransport p {q(t)} p^{-1} \nabla F_p(u(t)) }{\velocity q(t)} = 
  \scalarat {q(t)} {q(t)^{-1} \nabla F_p(u(t)) }{\velocity q(t)} = \\ \scalarat {q(t)} {q(t)^{-1} \nabla F_p(u(t) ) - \expectat {q(t)} {q(t)^{-1} \nabla F_p(u(t) )} }{\velocity q(t)}\ .
\end{multline}

%\textcolor{red}{Ho aggiunto $(t)$ in $q(t)$ Si può aggiungere che $\nabla$ è il gradiente nel prodotto scalare ordinario e $p^{-1}\nabla$ nello scalare con il peso $p$ a meno di una costante. O forse è chiaro. Volendo si può aggiungere esplitamente la conclusione
\

Therefore, the natural gradient is defined as
\begin{equation*}
    \Grad F(q) = q^{-1} \nabla F_p(u(t) ) - \expectat {q} {q^{-1} \nabla F_p(u(t))} 
\end{equation*} 
We use here the name of natural gradient for a computation which does not involve the Fisher matrix because of our choice of the inner product. The push forward of our definition to the tangent bundle of the simplex with the Fisher metric would indeed map our definition to the Riemannian one. \\

%\luigi{After some discussion with Giovanni, we rewrote the paragraphs above, starting fro beginning of the section}

%hence $\Grad F(q) \in \mixfiberat q \mu$ is the representation in the
%duality of $$\expfiberat q \mu \ni \velocity  q \mapsto
%dF_p(s_p(q))[\etransport {q} p \velocity q] \ .$$

We are going to generalize the computation of the gradient to other
cases are of interest, namely, the \emph{Lagrangian} function, or Lagrangian
field, defined on the exponential bundle
$\expbundleat \mu$, and the \emph{Hamiltonian} function, or Hamiltonian field, defined on the dual bundle $\mixbundleat \mu$.

To include both cases, we derive below the generalization of natural
gradient to functions defined on the full statistical bundle
$\fullbundleat \mu$ and possibly depending on external
parameters. While this derivation is essentially trivial,
nevertheless we present here a full proof in order to introduce and
clarify the geometrical features of our presentation of the mechanics
of the open probability simplex in the next section.

In the statistical bundles the partial derivatives are not defined, but they
are defined in the trivialisations given by the affine
charts. Precisely, let be given a scalar field $F \colon \fullbundleat
\mu \times \mathcal D \to
\reals$, $\mathcal D$ a domain of $\reals^k$,  and a generic smooth
curve
\begin{equation*}
  t \mapsto (q(t),\eta(t),w(t),c(t)) \in \fullbundleat \mu \times
  \mathcal D \ .
\end{equation*}
% \goffredo{In the following, it may be better to chose a different name for the generic function of time $\theta(t)$, as this may lead to confusion wrt the $\theta$-parameterization: e.g. $c(t)$?} \luigi{$\Grad$ vs $\Grad_q$. Also notice that the notation for $\Grad_1$ and $\Grad_2$ is not compatible with the ordering of the arguments here.}

We want to write 
\begin{multline}\label{eq:totalderivative}
  \derivby t F\big(q(t),\eta(t),w(t),c(t)\big) = \\ \scalarat {q(t)} {
    \Grad F\big(q(t),\eta(t),w(t),c(t)\big) }{\velocity q(t)} + 
\scalarat
  {q(t)} {\Derivby t \eta(t)} { \Grad_\text{m}
    F\big(q(t),\eta(t),w(t),c(t)\big)} + \\   \scalarat {q(t)} {\Grad_\text{e} F\big(q(t),\eta(t),w(t),c(t)\big)}{\Derivby t w(t)}
  + \nabla F\big(q(t),\eta(t),w(t),c(t)\big) \cdot \dot c(t) \ ,
\end{multline}
where the four components of the gradient are
\begin{equation*}
\fullbundleat \mu \times \mathcal D \ni (q,\eta,w,c) \mapsto \begin{cases} (q,\Grad F\big(q,\eta,w,c\big))
  \in \mixfiberat q \mu \\
(q,\Grad_\text{m} F\big(q,\eta,w,c\big))
  \in \expfiberat q \mu \\  (q,\Grad_\text{e} F\big(q,\eta,w,c\big))
               \in \mixfiberat q \mu \\  (q,\nabla  F\big(q,\eta,w,c\big))
  \in \maxexpat \mu \times \reals^k
\end{cases}
\end{equation*}
\

Let us fix a reference density $p$ and express both the given function and the
generic curve in the chart at $p$. We can write the total derivative as
\begin{multline*}
  \derivby t F \big(q(t),\eta(t),w(t),c(t)\big) = \derivby t
   F_p(u(t),\zeta(t),v(t),c(t)) = \\
  D_1 F_p \big(u(t),\zeta(t),v(t),c(t)\big) \big[\dot u(t) \big] + D_2
  F_p \big(u(t),\zeta(t),v(t),c(t)\big) \big[\dot \zeta(t) \big] + \\
   D_3 F_p\big(u(t),\zeta(t),v(t),c(t)\big) \big[\dot v(t) \big] +
   D_4 F_p\big(u(t),\zeta(t),v(t),c(t)\big) \big[\dot c(t) \big] \ . 
\end{multline*}

In the equation above, $D_i$, with $i=1,\dots,4$ denotes the differential of $F_p$ with
respect to the $i$-th variable, which is intended to provide a linear
operator to be represented by the appropriate dual vector, that is,
the value of the proper gradient.

The last term does not require any comment and we can use the ordinary
Euclidean gradient:
\begin{equation*}
  D_4 F_p\big(u(t),\zeta(t),v(t),c(t)\big) \big[\dot c(t) \big] =
  \nabla F_p\big(u(t),\zeta(t),v(t),c(t)\big) \cdot \dot c(t) \ .
\end{equation*}

Let us consider together the second and the third term. This is a
computation of the fiber derivative and does not involve the
representation in chart. Given
$\alpha \in \mixfiberat p \mu$ and $\beta \in \expfiberat p \mu$, that
is, $(\alpha,\beta) \in \fullfiberat p \mu$, we have
\begin{multline*}
  D_2 F_p(u,\zeta,v,c) [\alpha] + D_3 F_p(u,\zeta,v,c) [\beta] =
  \left. \derivby t F_p(u,\zeta+t\alpha,w+t\beta,c) \right|_{t=0} = \\
  \left. \derivby t F(q,\eta +t \mtransport p q \alpha,v + t
    \etransport p q \beta,c) \right|_{t=0} = \mathbb F
  F(q,\eta,w,c)[(\mtransport p q \alpha,\etransport p q
  \beta)] = \\
  \scalarat q {\mtransport p q \alpha} {\Grad_\text{m} F(q,\eta,w,c)}
  + \scalarat q {\Grad_\text{e} F(q,\eta,w,c)} {\etransport p q \beta}
  \ ,
\end{multline*}
where $\mathbb F$ denotes the fiber derivative in $\fullfiberat q
\mu$, which is expressed, in turn, with the relevant gradients. The
notation is possibly confusing, but consider that the inner
product has is always $\mixfiberat q \mu$ first, followed by $\expfiberat
q \mu$ and that the subscript to the $\Grad$ symbol displays which
component of the full bundle is considered.

We have that
\begin{equation*}
  \Derivby t w(t) = \etransport p q(t) \dot v(t) \ , \quad \Derivby
  t \eta (t) = \mtransport p {q(t)} \dot \zeta(t) \ .
\end{equation*}
Putting together all results up now, we have proved that
\begin{multline*}
  \derivby t F\big(q(t), \eta(t),w(t),c(t)\big) =\\  D_1 F_p
                                                   \big(u(t),\zeta(t),v(t),c(t)\big)
                                                  \big[\etransport
                                                   {e_p(u(t))} p
                                                   \velocity q(t)  \big]  + \\
                                                 \scalarat
                                                   {q(t)} {\Derivby t \eta(t)} { \Grad_\text{m}
                                                   F\big(q(t),\eta(t),w(t),c(t)\big)} + \\  \scalarat {q(t)} {\Grad_\text{e} F\big(q(t),\eta(t),w(t),c(t)\big)}{\Derivby t w(t)}
                                                                                               +\\  \nabla F\big(q(t),\eta(t),w(t),c(t)\big) \cdot \dot c(t) \ ,
\end{multline*}

To identify the first term in the total derivative above, consider the
``constant'' case,
\begin{equation*}
  q(t) = e_p(u(t)), \quad \eta(t) =
  \mtransport p {e_p(u(t))} \zeta,  \quad w(t) = \etransport p {e_p(u(t))} v, \quad c(t) = c \ ,
\end{equation*}
so that the first term reduces to $D_1 F_p(u(t),\zeta,v,c) [\etransport {e_p(u(t))} p \velocity q(t) ]$. It
follows that the proper way to compute the first gradient is to
consider the function on $\maxexpat \mu$ defined by 
 \begin{equation*}
 q \mapsto F_{\zeta,v,c}(q) = F(q,\mtransport p q \zeta,\etransport p q v,c)
 \end{equation*}
which  has a natural gradient whose chart representation is precisely
that first term.

We state the results obtained above in the following formal statement.
\begin{proposition}\label{prop:totalderivative} 
The total derivative \cref{eq:totalderivative} holds true, where
\begin{enumerate}
\item \label{prop:totalderivative1} $\Grad F\big(q,\eta,w,c\big)$ is the natural gradient of
  \begin{equation*}
    q
  \mapsto F(q,\mtransport p q \zeta,\etransport p q v,c) \ ,
  \end{equation*}
that is, with the representation in $p$-chart
\begin{equation*}
  F_p(u,\zeta,w,c) = F(e_p(u),\mtransport p {e_p(u)} \zeta,\etransport p {e_p(u)} v,c) \ ,
\end{equation*}
it is defined by
\begin{equation*}
      \scalarat{q}{\Grad F(q,\zeta,w,c)}{\velocity q} = D_1F_p(u,\zeta,w,c) \left[\etransport{q}{p} \velocity q\right] \ , \quad (q,\velocity q) \in \expbundleat \mu \ ;
  \end{equation*}
\item \label{prop:totalderivative2} $\Grad_\text{m} F\big(q,\eta,w,c\big)$ and $\Grad_\text{e}
  F\big(q,\eta,w,c\big)$ are the fiber gradients;
\item $\nabla F\big(q,\eta,w,c\big)$ is the Euclidean gradient w.r.t.~the last variable.
\end{enumerate}
\end{proposition}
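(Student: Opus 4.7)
The plan is to fix a reference density $p \in \maxexpat \mu$, pull everything back to the chart centered at $p$, and apply the ordinary chain rule in that chart. The curve pulls back to $t \mapsto (u(t),\zeta(t),v(t),c(t))$ via $q(t) = e_p(u(t))$, $\eta(t) = \mtransport p {q(t)} \zeta(t)$, $w(t) = \etransport p {q(t)} v(t)$, with $c(t)$ unchanged, and the scalar field pulls back to $F_p(u,\zeta,v,c) = F(e_p(u),\mtransport p {e_p(u)}\zeta,\etransport p {e_p(u)} v,c)$. The chain rule produces four terms $D_i F_p[\cdot]$, one for each slot, which then have to be rewritten intrinsically, i.e., in a way that eliminates the auxiliary reference $p$ in favor of the moving base point $q(t)$.

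For the parameter term ($i=4$), nothing has to be done: $D_4 F_p(\cdots)[\dot c(t)]$ is by definition $\nabla F(\cdots)\cdot \dot c(t)$. For the fiber terms ($i=2,3$), I would observe that $\Derivby t \eta(t) = \mtransport p {q(t)}\dot \zeta(t)$ and $\Derivby t w(t) = \etransport p {q(t)} \dot v(t)$, and use the duality identity \eqref{conj-scalarat}: the fiber derivatives of $F$ at $q(t)$ are represented, via the Riesz identification on $\mixfiberat {q(t)} \mu \times \expfiberat {q(t)} \mu$ with the inner product $\scalarat {q(t)} \cdot \cdot$, by two vectors which I would baptize $\Grad_\text{e} F(q,\eta,w,c) \in \mixfiberat q \mu$ and $\Grad_\text{m} F(q,\eta,w,c) \in \expfiberat q \mu$. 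Compatibility of the pairing with the transports ensures this definition does not depend on $p$.

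The genuine step is the first term. To interpret $D_1F_p(u(t),\zeta,v,c)\bigl[\dot u(t)\bigr]$ intrinsically I would freeze $\zeta,v,c$ and look at the scalar field
\begin{equation*}
    F_{\zeta,v,c} \colon \maxexpat \mu \to \reals, \qquad F_{\zeta,v,c}(q) = F(q,\mtransport p q \zeta,\etransport p q v,c).
\end{equation*}
By \cref{eq:naturalgradient,eq:naturalgradientinchart}, this field has a natural gradient $\Grad F_{\zeta,v,c}(q) \in \mixfiberat q \mu$ characterized by $\derivby t F_{\zeta,v,c}(q(t)) = \scalarat{q(t)}{\Grad F_{\zeta,v,c}(q(t))}{\velocity q(t)}$, and its chart expression is exactly $D_1F_p(u,\zeta,v,c)\bigl[\etransport {q}{p} \velocity q\bigr]$. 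Defining $\Grad F(q,\eta,w,c)$ to be this natural gradient (with $\eta = \mtransport p q \zeta$, $w = \etransport p q v$) closes the argument, since summing the four contributions reproduces \eqref{eq:totalderivative}.

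The only real obstacle is bookkeeping: one must check that the three gradients $\Grad F$, $\Grad_\text{m} F$, $\Grad_\text{e} F$ are intrinsic sections of the appropriate fibers at $q(t)$, independent of the auxiliary choice of $p$. This follows from \cref{eq:transports-duality,conj-scalarat} together with the transitivity $\etransport p q\,\etransport r p = \etransport r q$ and $\mtransport p q\,\mtransport r p = \mtransport r q$: changing the reference density replaces the chart by an affine isomorphism whose linear part is one of the dual transports, and the Riesz representers transform covariantly so that the pairings in \eqref{eq:totalderivative} are unchanged.
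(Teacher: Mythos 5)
Your proposal is correct and follows essentially the same route as the paper: express the curve and the field in the chart at $p$, apply the chain rule to get the four terms, identify the parameter term with the Euclidean gradient, represent the two fiber terms through the duality pairing at $q(t)$ using $\Derivby t \eta = \mtransport p {q}\dot\zeta$ and $\Derivby t w = \etransport p {q}\dot v$, and recognize the first term as the natural gradient of the frozen-fiber field $q\mapsto F(q,\mtransport p q \zeta,\etransport p q v,c)$ via the ``constant'' case. Your explicit check that the three gradients are $p$-independent, using \cref{eq:transports-duality,conj-scalarat} and the semigroup property of the transports, is a small but welcome addition that the paper leaves implicit.
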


We have concluded the computation of the total derivative of a
parametric function of the full bundle. The special cases of the
Lagrangian and the Hamiltonian easily follows as a
specialization. Notice that the computation of the natural gradient \eqref{prop:totalderivative1} in Proposition
\ref{prop:totalderivative} is done by fixing the variables in the fibers to be translations of
fixed ones.

We are going to discuss the following examples: the \emph{quadratic
  Lagrangian} $L(q,w) = \frac12 \scalarat q w w$; the \emph{cumulant
  Lagrangian} $L(q,w) = K_q(w)$; \emph{conjugate cumulant Hamiltonian}
$H(q,\eta) = \expectat q {(1+\eta) \log (1 + \eta)}$. Detailed
computations of the relevant gradients are given in
\ref{sm:computations}.

\section{Mechanics of the Statistical Bundle}
\label{sec:mechanics}
If $q \colon [0,1] \ni t \mapsto q(t)$ is
a smooth curve in the exponential manifold $\maxexpat \mu$ and
$t \mapsto (q(t),\velocity q(t))$ is its lift to the
statistical bundle, the \emph{action integral} of the Lagrangian $L
\colon \expbundleat \mu \times [0,1] \to \reals$ is
  \begin{equation}
q \mapsto   A(q) = \int_{0}^{1} L(q(t),\velocity q(t),t) \ dt \ .
\end{equation}
In the exponential chart $s_p$ centered at $p$, the curve is $q(t) =
\euler^{u(t)-K_p(u(t))}\cdot p$, the coordinates of the lift are $s_p(q(t),\velocity q(t)) =
\left(u(t),\dot u(t)\right)$, and the expression of the Lagrangian is
\begin{equation*}
L(q(t),\velocity q(t),t) = L\left(e_p(u(t)),\etransport p {e_p(u(t))} \dot
u(t),t\right) = L_p(u(t),\dot u(t),t) \ . 
\end{equation*}
The expression of the action integral in coordinates is
\begin{equation}\label{eq:action-in-chart}
  u \mapsto A_p(u) = \int_{0}^{1} L_p(u(t),\dot u(t),t) \ dt \ , 
\end{equation}
and the Euler-Lagrange equation of \cref{eq:action-in-chart} is 
\begin{equation*}%\label{eq:EL-chart}
   D_1 L_p(u(t),\dot u(t),t)[h] = \derivby t D_2 L_p(u(t),\dot u(t),t)[h] \ . \quad t \in [0,1] \ , h \in \expfiberat p \mu \ . 
\end{equation*}

By using Proposition \ref{prop:totalderivative}, we obtain the intrinsic
expression of the Euler-Lagrange equations in the statistical bundle.
\begin{proposition}[The Euler-Lagrange equation]
  If $q$ is an extremal of the action integral, then, with the
  notations of Proposition \ref{prop:totalderivative}, 
  \begin{equation}\label{eq:Euler-Lagrange}
  \Derivby t \Grad_\text{e} L(q(t),\velocity q(t),t) = \Grad L(q(t),\velocity q(t),t) \ .
\end{equation}
\end{proposition}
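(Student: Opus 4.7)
The plan is to use the standard route: reduce to the chart centered at a reference $p$, where $\expfiberat p \mu$ is a genuine vector space and the calculus of variations is classical, derive the chart-form Euler--Lagrange equation, and then translate each of its two sides into intrinsic bundle language by means of Proposition \ref{prop:totalderivative}.

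First I would fix $p \in \maxexpat \mu$ and look at the coordinate action \eqref{eq:action-in-chart}. Since the constraint $\expectat p h = 0$ is linear, admissible variations $u \mapsto u + \epsilon h$ with $h \in \expfiberat p \mu$ vanishing at the endpoints form a vector space, and the usual argument (differentiate at $\epsilon = 0$, integrate by parts in $t$, apply the fundamental lemma) yields the chart-form Euler--Lagrange equation
\[
    D_1 L_p(u(t),\dot u(t),t)[h] \;=\; \derivby t \bigl( D_2 L_p(u(t),\dot u(t),t)[h]\bigr), \qquad h \in \expfiberat p \mu.
\]
To convert the left-hand side, Proposition \ref{prop:totalderivative}(\ref{prop:totalderivative1}) gives, with $\etransport{q(t)}{p}\etransport{p}{q(t)} = \mathrm{id}$ on $\expfiberat p \mu$, the identity
\[
    D_1 L_p(u,\dot u,t)[h] \;=\; \scalarat{q(t)}{\Grad L(q(t),\velocity q(t),t)}{\etransport{p}{q(t)} h}.
\]

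For the right-hand side, Proposition \ref{prop:totalderivative}(\ref{prop:totalderivative2}) together with the duality of transports \eqref{eq:transports-duality} gives
\[
    D_2 L_p(u,\dot u,t)[h] \;=\; \scalarat{q(t)}{\Grad_\text{e} L(q(t),\velocity q(t),t)}{\etransport{p}{q(t)} h} \;=\; \scalarat{p}{\mtransport{q(t)}{p}\Grad_\text{e} L(q(t),\velocity q(t),t)}{h}.
\]
Because $h$ is independent of $t$, differentiating through reduces to taking $\derivby t$ of the $p$-representative $\mtransport{q(t)}{p}\,\Grad_\text{e} L(q(t),\velocity q(t),t)$, and by the very definition of the mixture covariant derivative this equals $\mtransport{q(t)}{p}\,\Derivby t \Grad_\text{e} L(q(t),\velocity q(t),t)$. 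Returning to the moving frame via duality, I obtain
\[
    \derivby t D_2 L_p(u,\dot u,t)[h] \;=\; \scalarat{q(t)}{\Derivby t \Grad_\text{e} L(q(t),\velocity q(t),t)}{\etransport{p}{q(t)} h}.
\]
As $h$ ranges over $\expfiberat p \mu$, the test vector $\etransport{p}{q(t)} h$ ranges over all of $\expfiberat{q(t)} \mu$, so the two intrinsic pairings must agree for every test vector, which is exactly \eqref{eq:Euler-Lagrange}.

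The only step that is not completely mechanical is the last one: one must be sure that the ordinary $t$-derivative of the $p$-representative of a curve living in the \emph{dual} (mixture) fibers corresponds to $\mDerivby t$ and not to $\eDerivby t$, since $\Grad_\text{e} L$ is a section of $\mixbundleat\mu$ while the direction $\velocity q$ against which it is paired is a section of $\expbundleat \mu$. Once that bookkeeping is respected, the duality \eqref{eq:transports-duality} makes every identification canonical and the intrinsic equation drops out with no further computation.
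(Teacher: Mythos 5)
Your proposal is correct and follows essentially the same route as the paper: pass to the chart at $p$, obtain the classical Euler--Lagrange equation for $A_p$, and then translate both sides into intrinsic form via Proposition \ref{prop:totalderivative}, the transport duality, and the definition of the mixture covariant derivative. The paper leaves these translation steps implicit (it simply states the chart-form equation and invokes Proposition \ref{prop:totalderivative}), and your bookkeeping — in particular the observation that $\Derivby t$ on the left-hand side must be the mixture covariant derivative because $\Grad_\text{e} L$ is a section of $\mixbundleat \mu$ — correctly fills in exactly what is omitted.
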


At each fixed density $q \in \maxexpat \mu$, and each time $t$, the
partial mapping
$\expfiberat q \mu \ni w \mapsto L_{q,t}(w) = L(q, w, t)$is defined on
the vector space $\expfiberat q \mu$, and its gradient mapping in the
duality of $\mixfiberat q \mu \times \expfiberat q \mu$ is
$w \mapsto \Grad_\text{e} L(q,w,t)$. The standard Legendre transform
argument provides the intrinsic form of the Hamilton equations under
the following assumption.

\begin{assumption}\label{h1}
  We will always restrict our attention to Lagrangians such that the
  fiber gradient mapping at $q$,
  $w \mapsto \eta = \Grad_\text{e} L_q(w)$ is a 1-to-1 mapping from
  $\expfiberat q \mu$ to $\mixfiberat q \mu$. In particular, this true
  when the partial mappings $w \mapsto L_q(w)$ are strictly convex for
  each $q$. 
\end{assumption}
In our finite dimensional context, this assumption is actually
equivalent to the assumption that the fiber gradient is a
diffeomorphism of the statistical bundles
$\Grad_{\text{2}} L \colon \expbundleat \mu \to \mixbundleat
\mu$. This is related to the properties of \emph{regularity} and
hyper-regularity, cf. \cite[\S~3.6]{abraham|marsden:1978}. The bilinear form
$\mixfiberat q \mu \times \expfiberat q \mu \ni (\eta,w) \mapsto
\scalarat q \eta w = \expectat q {\eta w}$ will always be written in
this order. The Legendre transform of $L_{q,t}$ is defined
for each $\eta \in \mixfiberat q \mu$ of the image of $\Grad_\text{e}
L(q, \cdot, t)$, so that the Hamiltonian is
\begin{equation}
  \label{eq:derived-Hamiltonian}
  H(q,\eta,t) = \scalarat q {\eta}{(\Grad_\text{e} L_{q,t})^{-1}(\eta)} - L(q,(\Grad_\text{e}
  L_{q,t})^{-1}(\eta)) \ .
\end{equation}
If $t \mapsto q(t)$ a solution of Euler-Lagrange
\cref{eq:Euler-Lagrange}, the curve
$t \mapsto \zeta(t) = (q(t),\eta(t))$ in $\mixbundleat \mu$, where
$\eta(t) = \Grad_\text{e} L(q(t),\velocity q(t),t)$ is the
\emph{momentum}. 
\begin{proposition}[The Hamilton equations]\label{prop:Hamiltonequation}
  When Assumption \ref{h1} holds, the momentum curve satisfies the \emph{Hamilton equations}, 
\begin{equation}
  \label{eq:Hamilton}
  \left\{\begin{aligned}
    \Derivby t \eta(t) &= - \Grad H(q(t),\eta(t),t) \\
     \velocity q(t) &= \Grad_\text{m} H(q(t),\eta(t),t) .
  \end{aligned}\right.
\end{equation}
Moreover,
\begin{equation}
  \label{eq:conservationH}
  \derivby t H(q(t),\eta(t),t) = \pderivby t H(q(t),\eta(t),t) \ .
\end{equation}
\end{proposition}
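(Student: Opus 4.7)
The plan is to compute $\derivby t H(q(t),\eta(t),t)$ in two ways, once via the total derivative formula of Proposition \ref{prop:totalderivative} and once via the explicit Legendre-transform expression \eqref{eq:derived-Hamiltonian}, and then match terms. Throughout, the repeatedly exploited fact is that $\eta(t) = \Grad_\text{e} L(q(t),\velocity q(t),t)$, which is exactly the relation that makes the Legendre transform work.

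First I would establish the fiber identity $\Grad_\text{m} H(q,\eta,t) = (\Grad_\text{e} L_{q,t})^{-1}(\eta)$. Fixing $q$ and $t$, set $w(\eta) = (\Grad_\text{e} L_{q,t})^{-1}(\eta)$. Differentiating $H(q,\eta,t) = \scalarat q \eta {w(\eta)} - L(q,w(\eta),t)$ in a fiber direction $\alpha \in \mixfiberat q \mu$ yields
\begin{equation*}
\scalarat q \alpha {w(\eta)} + \scalarat q \eta {D_\eta w(\eta)[\alpha]} - D_\text{e} L(q,w(\eta),t)\bigl[D_\eta w(\eta)[\alpha]\bigr] = \scalarat q \alpha {w(\eta)},
\end{equation*}
because the last two terms cancel by the defining property $\eta = \Grad_\text{e} L(q,w(\eta),t)$. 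On the solution curve $w = \velocity q$, so this is exactly the second Hamilton equation.

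Next I would compute the natural gradient $\Grad H(q,\eta,t)$ with $\eta$ held fixed modulo the mixture transport, as prescribed by part~\ref{prop:totalderivative1} of Proposition \ref{prop:totalderivative}. Take a curve $s\mapsto q(s)$ through $q$ and set $\eta(s) = \mtransport p {q(s)} \zeta$, so that $\Derivby s \eta(s) = 0$. Writing $\tilde w(s) = (\Grad_\text{e} L_{q(s),t})^{-1}(\eta(s))$, Proposition \ref{prop:covariantduality} gives $\derivby s \scalarat {q(s)} {\eta(s)} {\tilde w(s)} = \scalarat{q(s)}{\eta(s)}{\Derivby s \tilde w(s)}$, while the total derivative of $L$ (viewed as a scalar field on $\expbundleat \mu$) reads $\derivby s L(q(s),\tilde w(s),t) = \scalarat{q(s)}{\Grad L}{\velocity q(s)} + \scalarat{q(s)}{\Grad_\text{e} L}{\Derivby s \tilde w(s)}$. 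Using $\eta(s) = \Grad_\text{e} L(q(s),\tilde w(s),t)$, the two $\Derivby s \tilde w$-terms cancel, leaving $\derivby s H(q(s),\eta(s),t) = -\scalarat{q(s)}{\Grad L}{\velocity q(s)}$. Comparing with Proposition \ref{prop:totalderivative} this forces $\Grad H(q,\eta,t) = -\Grad L(q,w(\eta),t)$. Substituting the Euler--Lagrange equation \eqref{eq:Euler-Lagrange} in the form $\Grad L = \Derivby t \Grad_\text{e} L = \Derivby t \eta$ yields the first Hamilton equation.

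For the conservation identity \eqref{eq:conservationH}, I would apply the total derivative of Proposition \ref{prop:totalderivative} to $H$ along the solution curve and substitute the two Hamilton equations already proved:
\begin{equation*}
\derivby t H = \scalarat{q}{\Grad H}{\velocity q} + \scalarat{q}{\Derivby t \eta}{\Grad_\text{m} H} + \pderivby t H = \scalarat{q}{-\Derivby t \eta}{\velocity q} + \scalarat{q}{\Derivby t \eta}{\velocity q} + \pderivby t H,
\end{equation*}
so the first two terms cancel. The main obstacle I anticipate is purely bookkeeping: keeping straight the difference between the natural gradient $\Grad H$ (in which $\eta$ is varied only through $\mtransport{}{}$) and a naive partial derivative, and choosing the correct covariant derivative on each fiber when invoking Proposition \ref{prop:covariantduality}. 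Once the identification $\eta = \Grad_\text{e} L$ is consistently used, the cancellations that encode the Legendre duality propagate cleanly from the fiber computation to the base computation.
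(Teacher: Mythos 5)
Your argument is correct, and it supplies a derivation that the paper deliberately omits: immediately after the statement the authors write that the two propositions ``do not require here a proof because they are standard results in classical mechanics,'' the only claimed novelty being the intrinsic form. So there is no in-paper proof to compare against; what you have done is carry out the classical Legendre-duality computation entirely with the paper's own tools --- Proposition~\ref{prop:totalderivative} for the decomposition of the total derivative, Proposition~\ref{prop:covariantduality} for differentiating the pairing, and Assumption~\ref{h1} to invert the fiber gradient --- which is exactly what ``the special intrinsic form'' requires and what a careful reader would want spelled out. Each step checks out: the fiber computation gives $\Grad_\text{m} H(q,\eta,t) = (\Grad_\text{e}L_{q,t})^{-1}(\eta)$, which on the momentum curve equals $\velocity q$; holding $\eta$ fixed under the mixture transport (so that $\Derivby s \eta = 0$, precisely the prescription of item~(\ref{prop:totalderivative1}) of Proposition~\ref{prop:totalderivative}) makes the two $\Derivby s \tilde w$ terms cancel through $\eta = \Grad_\text{e}L$, yielding $\Grad H = -\Grad L$ and hence, via the Euler--Lagrange equation \eqref{eq:Euler-Lagrange}, the first Hamilton equation; the conservation identity then follows by substitution. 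Two small points would be worth making explicit in a write-up: (i) the differentiability of $s \mapsto \tilde w(s) = (\Grad_\text{e}L_{q(s),t})^{-1}(\eta(s))$, which is where the hyper-regularity reading of Assumption~\ref{h1} (fiber gradient a diffeomorphism of bundles, not merely a fiberwise bijection) is actually used; and (ii) the passage from $\scalarat{q}{\Grad H + \Grad L}{\velocity q} = 0$ for all velocities to $\Grad H = -\Grad L$, which relies on $\velocity q$ ranging over all of $\expfiberat q \mu$ and on the non-degeneracy of the duality pairing on each fiber.
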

The two proposition above do not require here a proof because they are
standard results in classical mechanics. What is relevant here is the special intrinsic form which is obtained by the use of the covariant
derivatives and the gradients of the statistical bundles.

As simple examples, one can compute the Euler-Lagrange equation and
the Hamilton equation for the quadratic Lagrangian and the cumulant
Lagrangian, see \ref{sm:computations}.  A slightly more sophisticate
example is inspired by the standard \emph{free particle} Lagrangian,
where the role of the point particle is played by a probability
density as a point on the statistical manifold 
\cite{pistone:2018Lagrange}. The Lagrangian is written as a difference
of the quadratic form and a potential function given by the negative
of the entropy function $\entropyof {q(t)}$.  We keep the inertial
mass $m$ as a parameter,
\begin{equation*}L(q,w) = \frac m2 \scalarat q {w}{w} + \kappa
  \entropyof q \ , m, \kappa > 0, (q,w) \in \expbundleat \mu \ .
\end{equation*}
The first component of the natural gradient is easily computed and the natural gradient of the entropy is \cref{eq:grad-entropy}. The
Euler-Lagrangian equation is
\begin{equation}\label{eq:dynamic-example}
  m {\Derivby t \velocity q(t)} = \frac m 2\left(\velocity q(t)^2 -
     \expectat{q(t)}{\velocity q(t)^2}\right) - \kappa (\log q(t) + \entropyof {q(t)} \ ,
 \end{equation} 
which is Newton's law, written in terms of the mixture acceleration. 
We express this equation as a system of ordinary differential equations for $q$ and $\velocity q$ by writing $v(t) = \velocity q(t)$ and note
that $v(t) = \derivby t \log q(t)$. (see cf. \ref{odes}).
\begin{figure}
\begin{center}
\includegraphics[width=\textwidth]{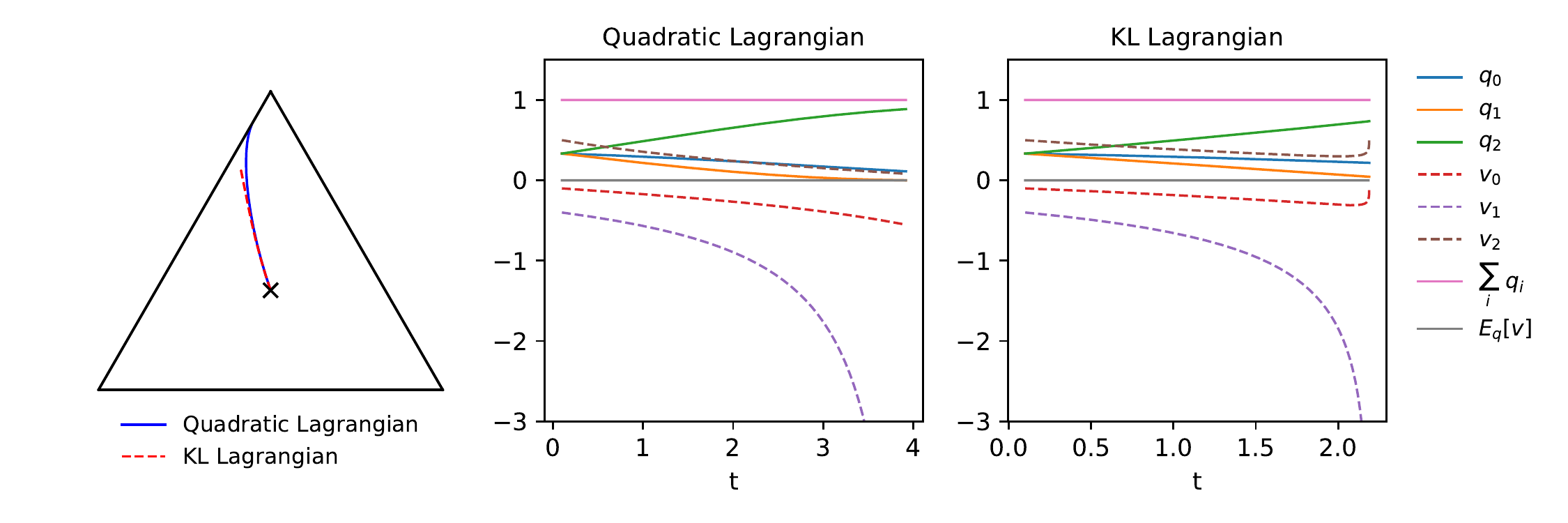}
\caption{Free-motion solutions on the simplex for the quadratic (blue) and KL (dashed red) Lagrangians. %The pink straight line in the right panels indicates the value of the sum of the probability components varying with time. The constant value equal to one confirms that the trajectories never leave the simplex. The gray straight lines indicate the expected value of the score velocity at $q$. These values vanish at any time, implying that the velocities belong to the statistical bundle. 
In the quadratic case, the geodesic motion  approaches the boundary of the simplex tangentially, where one component of the probability vanishes (while the associated velocity diverges). Similarly, the solution of the free KL Lagrangian flow moves toward the boundary of the simplex. In the latter case, however, the components of the score velocity diverge at the boundary, while the probability tends to it non tangentially. Both systems share the same initial conditions (black cross): $q_0 = (1/3, 1/3, 1/3), w_0 = (-0.1, -0.4,  0.5)$. %In the present (and the forthcoming) examples, the system of ordinary differential equations is solved numerically via Python ODEINT integrator.
All the systems of ordinary differential equations are solved numerically via Python ODEINT integrator.}\label{fig:free-motion}
\end{center}
\end{figure}
\begin{figure}
\begin{center}
\includegraphics[width=\textwidth]{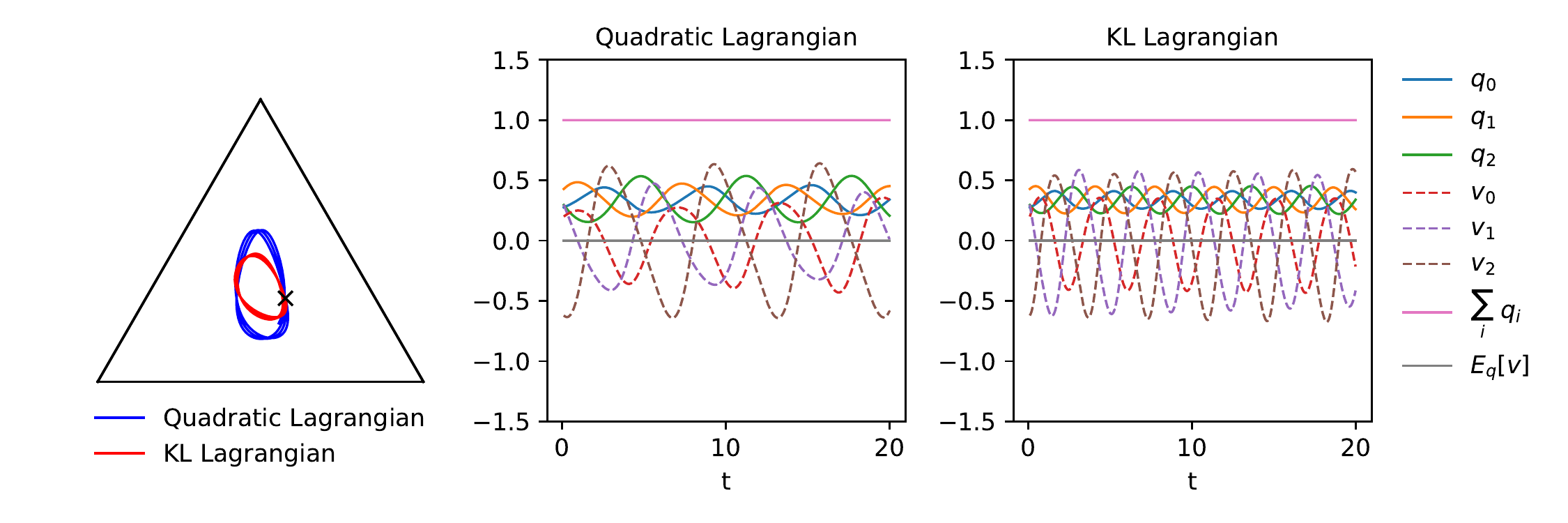}
\caption{Solutions of the E-L equations, for the quadratic (blue, cf.~\cref{eq:dynamic-example}) and the KL (red, cf.~\cref{eq:EL-2}, $a=b=1$) Lagrangians, describing the motion in a potential on the simplex. Both systems show the expected harmonic oscillating behavior, while generally displaying different trajectories for the same initial conditions (black cross).The explicit ODEs systems for the two cases are given in \ref{odes}.}\label{simp2}
\end{center}
\end{figure}

A non-quadratic, non-symmetric generalization of the kinetic energy on the simplex is realised via the Kullback-Leibler divergence. We introduce this case by first discussing the very idea of deducing a Lagrangian, or a Hamiltonian, from a divergence function.

A divergence is a smooth mapping
$D \colon \maxexpat \mu \times \maxexpat \mu \to \reals$, such that
for all $p,q \in \maxexpat \mu$ it holds $D(p,q) \geq 0$ and
$D(p,q) = 0$ if, and only if, $p=q$. Typically, a divergence is
not symmetric, and frequently the discussion involves both the
divergence and the so-called dual divergence $D^*(p,q) = D(q,p)$. Every divergence can be associated to a Lagrangian by the canonical
mapping:
\begin{equation} \label{eq:goffredo}
  \maxexpat \mu ^2 \ni (q,r) \mapsto (q,s_q(r)) = (q,w) \in
\expbundleat \mu \ ,
\end{equation}
where $r = \euler^{w - K_q(w)} \cdot q$, that is, $w = s_q(r)$. The inverse mapping is the \emph{retraction}
\begin{equation}
  \label{eq:EXP}
  \expbundleat \mu \ni (q,w) \mapsto (q,e_q(w)) = (q,r) \in \maxexpat
  \mu ^2 \ .
\end{equation}
As the curve $t \mapsto e_q(tw)$ has null exponential acceleration,
one could say that \cref{eq:EXP} defines the \emph{exponential} mapping of
the exponential connection, while \cref{eq:goffredo} defines the
so-called \emph{logarithmic} mapping. 

The expression in a chart centered at $p$ of the mapping
of \cref{eq:EXP} is affine:
\begin{equation}\label{eq:EXP-in-chart}
  \arraycolsep=1.2pt\def\arraystretch{1.8}
  \begin{array}{ccccccc}
   \expfiberat p \mu \times \expfiberat p \mu &\to& \maxexpat \mu \times
  \maxexpat \mu &\to& \expbundleat \mu &\to& \expfiberat p \mu \times \expfiberat p \mu \\ 
  (u,v) &\mapsto& (e_p(u),e_p(v)) &\mapsto& (e_p(u),s_{e_p(u)}(e_p(v)))
  &\mapsto& \left(u,\etransport p {e_p(u)}(v -u)\right) \ . 
\end{array}
\end{equation}
In conclusion, the correspondence above maps every divergence $D$ into
a \emph{divergence Lagrangian}, and conversely,
\begin{equation}\label{eq:goffredo-2}
    L(q,w) = D(q,e_q(w)) \ , \quad D(q,r) = L(q,s_q(r)) \ .
\end{equation}
Notice that, according to our assumptions on the divergence, the
divergence Lagrangian defined in \cref{eq:goffredo-2} is non-negative
and zero if, and only if, $w=0$.

Similarly, every divergence can be associated to a Hamiltonian function on the dual bundle by the
  canonical mapping:
\begin{equation} \label{eq:goffredo-star}
  \maxexpat \mu ^2 \ni (q,r) \mapsto (q,\eta_q(r)) = (q,\eta) \in
\mixbundleat \mu \ ,
\end{equation}
where $r = (1+\eta) \cdot q$. The inverse mapping is 
\begin{equation}
  \label{eq:EXP-star}
  \mixbundleat \mu \ni (q,\eta) \mapsto (q,(1+\eta) \cdot q) = (q,r) \in \maxexpat
  \mu ^2 \ .
\end{equation}

The canonical example is the Kullback-Leibler divergence $D(q,r)$, with cumulant Lagrangian
function $K_q(w)$. Accordingly,  the dual divergence $D(r,q) = \expectat r {\log \frac
      r q}$ is naturally associated to the Hamiltonian function
    \begin{equation*}
      H(q,\eta) = \expectat {(1+\eta) \cdot q} {\log \frac {(1+\eta)
          q} q} = \expectat q {(1+\eta) \log (1+\eta)} \ .
    \end{equation*}

\Cref{fig:free-motion,simp2}
provide an illustration of the free motion and the motion in a
potential for the quadratic and the Kullback-Leibler case. 

Motivated by our interest in optimization, we will discuss now, in detail, a family of parameterized Lagrangians of the following standard form,
\begin{equation}\label{eq:standard-lagrangian}
  L^{a,b,c}(q,w) = c(a^{-1} K_q(aw) - b f(q)) \ ,
\end{equation}
where $a,b,c > 0$ and $f$ is a scalar field on $\maxexpat \mu$ as, for
example, the negative entropy previously introduced. The Lagrangian above is parameterized in such a way that
\begin{displaymath}
\lim_{a \to 0} L^{a,b,b^{-1}}(q,w) = b^{-1}(D K_q(0)[w] - b f(q)) = -
f(q)\ .
\end{displaymath}
The cumulant term is the scaled Lagrangian $(q,w) \mapsto
a^{-1} K_q(aw)$ whose divergence function in terms of $q$ and $r = e_q(w)$ is
\begin{equation}\label{eq:ren}
  \frac1a \log \expectat q {\expof{a \left(\log \frac r q - \expectat q {\log
          \frac r q}\right)}} = \frac1a \log \expectat q {\left(\frac r q\right)^a}
 + D(q,r) \end{equation}
and the limit for $a \to 0$ is null. Notice that the first term on the rhs of \eqref{eq:ren} is proportional to the $a$-R\'enyi divergence of $r$ from $q$ \cite{renyi1961}. 

Here, the constant $a$ is intended to introduce a mass effect in the
model in such a way that $a=0$ implies that the Lagrangian lost any
dependence on the velocity $w$. We could talk also of an
inertia of the system. Typically, the notion of \emph{inertia}
describes the resistance of any physical object to any change in its
velocity. In our statistical setting, the dynamics of a state along
some direction in the manifold can be
interpreted as the result of the balance of a \emph{gain} of motion,
determined from the descent along some potential function (payoff),
against the \emph{cost} of motion to changes along a given direction from the given
state. In this sense, the Lagrangian vector field on the statistical manifold consistently minimizes the action of the
difference of a divergence and a potential function. 

From an optimization viewpoint, our variational problem corresponds to the minimization of an objective function, the potential, with a proximity constraint enforced via the kinetic energy term. The kinetic energy acts as a \emph{regulariser} for the velocities, leading to faster converging and more stable optimization algorithms \cite{betancourt2017conceptual,frana2019conformal,franca2020dissipative}. 
% Adam \cite{kingma2014adam}, AdaGrad \cite{JMLR:v12:duchi11a} and RMSprop \cite{rmsprop} algorithms, Relativistic Gradient Descent (RGD) algorithms 

\bigskip
  
Let us derive the Hamiltonian of our standard Lagrangian in
  \cref{eq:standard-lagrangian}. If $f$ is a real function with convex
  conjugate $f^*(\eta) = \sup_w \scalarof \eta w - f(w)$, then
  $g(w) = c(a^{-1} f(aw) - b)$ defines a new function whose convex
  conjugate is $g^*(\eta) = c(a^{-1} f^*(c^{-1} \eta) + b)$, where the conjugate momentum $\eta=\Grad_{\text{e}} L^{a,b,c}(q,w)$ is now a function of the parameters. In the
  case of a convex function, the Legendre transform coincides with the
  convex conjugate on the interior of the proper domain. In our case,
  $f^*(\eta) = \expectat q {(1+\eta)\log(1+\eta)}$, $\eta > -1$, and
  $b = bf(q)$, so that
  
    \begin{equation}\label{eq:standard-hamiltonian}
      H^{a,b,c}(q,\eta) = c\left(a^{-1} \expectat q
    {(1+c^{-1}\eta)\log(1+c^{-1}\eta)} + bf(q)\right) \ .  
\end{equation}
\

As $\lim_{c \to \infty} c \expectat q
    {(1+c^{-1}\eta)\log(1+c^{-1}\eta)} = 0$, we have   
$\lim _{c \to \infty} H^{a,c^{-1},c}(q,\eta) \to  f(q)$.
\bigskip

We now proceed to compute the relevant natural gradients with
Proposition \ref{prop:totalderivative}. By computing the
total differential on the curve $t \mapsto (q(t),aw(t))$, we get 
\begin{multline*}
  \derivby t K_{q(t)}(aw(t)) = \\
  \scalarat {q(t)} {\left(\frac{e_{q(t)}(aw(t))}{q(t)} - 1\right) -
    aw(t)}{\velocity q(t)}+\scalarat {q(t)}
  {\left(\frac{e_{q(t)}(aw(t))}{q(t)} - 1\right)} {\Derivby t aw(t)} \ .
\end{multline*}

The gradients of the Lagrangian \eqref{eq:standard-lagrangian} are
\begin{equation}\label{eq:grad-standard-lagrangian}
  \begin{aligned}
     \Grad L^{a,b,c}(q,w) &= c\left(a^{-1}\left(\frac{e_q(aw)}{q} - 1\right) -
w - b \Grad f(q)\right) \\
    \Grad_{\text{e}} L^{a,b,c}(q,w) &= c\left(\frac{e_q(aw)}{q} - 1\right)
  \end{aligned}
\end{equation}
Now the limit cases are controlled by $\lim_{a\to0}
\left(\frac{e_q(aw)}{q} - 1\right) = 0$ and $\lim_{a\to0}
a^{-1}\left(\frac{e_q(aw)}{q} - 1\right) = w$, so that,
\begin{equation*}
  \lim_{a\to0} \Grad L^{a,b,b^{1}}(q,w) = - \Grad f(q) \quad \text{and}
  \quad \lim_{a\to0} \Grad_{\text{e}} L^{a,b,c}(q,w) = 0 \ .
\end{equation*}
\

The Euler-Lagrange equation is

\begin{equation}\label{eq:EL-0}
  \Derivby t \left(\frac{e_q(a\velocity q(t))}{q(t)} - 1\right) =
  a^{-1}\left(\frac{e_q(a\velocity q(t))}{q(t)} - 1\right) -
\velocity q(t) - b \Grad f(q(t)) \ .
\end{equation}
\

Let us compute the covariant time-derivative on the lhs using the trick of
\cref{ex:chi-trick}. We write $\chi_a(t) = e_q(a\velocity q(t))$ and recall we are using the mixture covariant derivative for $(\chi_a(t)/q(t)-1) \in \mixfiberat q \mu$. Then the left-hand side of the Euler-Lagrange equation becomes

\begin{equation}\label{eq:Dbyt-chi}
  \Derivby t \left(\frac{\chi_a(t)}{q(t)}-1\right) = q(t)^{-1} \derivby t (\chi_a(t) - q(t)) = \frac{\chi_a(t)}{q(t)}
  \velocity \chi_a(t) - \velocity q(t) = \mtransport {\chi_a(t)}{q(t)} \velocity \chi_a(t) - \velocity q(t) \ ,
\end{equation}

where
\begin{equation*}
  \velocity \chi_a(t) = \derivby t \log \chi_a(t) = \derivby t a
  \velocity q(t) - \derivby t K_{q(t)}(a\velocity q(t)) + \velocity q(t) \ ,
\end{equation*}
which in turn implies
\begin{equation*}
  \velocity \chi_a(t) = \etransport {q(t)}{\chi_a(t)} (a \acceleration q(t) + \velocity q(t)) \ .
\end{equation*}
so that
\begin{equation*}
  \Derivby t \left(\frac{\chi_a(t)} {q(t)} - 1\right) = \mtransport
  {\chi_a(t)}{q(t)} \etransport {q(t)}{\chi_a(t)} (a \acceleration
  q(t) + \velocity q(t))- \velocity q(t) \ .
\end{equation*}

The Euler-Lagrange equation becomes an equation in $q$, $\velocity q$, $\acceleration q$, 
\begin{equation*}
\mtransport
  {e_{q(t)}(a\velocity q(t))}{q(t)} \etransport {q(t)}{e_{q(t)}(a\velocity q(t))} (a \acceleration
  q(t) + \velocity q(t)) =  a^{-1}\left(\frac{e_q(a\velocity
      q(t))}{q(t)} - 1\right) - b \Grad f(q(t)) \ ,
\end{equation*}
or, moving the transports to the right-hand side,
\begin{equation}\label{eq:EL-1}
  a \acceleration
  q(t) + \velocity q(t) =  \etransport {e_{q(t)}(a\velocity q(t))} {q(t)}
  \mtransport {q(t)}
 {e_{q(t)}(a\velocity q(t))} \left(a^{-1}\left(\frac{e_{q(t)}(a\velocity
      q(t))}{q(t)} - 1\right) - b \Grad f(q(t))\right) \ ,
\end{equation}
where the right-hand side could be rewritten with
$\mtransport q \chi (\chi/q - 1) = 1 - q/\chi$. Notice that the limit
form as $a \to 0$ is $\Grad f(q(t)) = 0$. For example, if
$f(q) = - \entropyof q$, then $q(t) = 1$.
\bigskip

A similar argument applies to the computation of the gradients of the
Hamiltonian \eqref{eq:standard-hamiltonian}. The variation of $H(q,\eta) = \expectat q
{(1+\eta)\log(1+\eta)}$ on the curve $t \mapsto
(q(t),c^{-1}\eta(t))$ is
\begin{multline*}
  \derivby t H(q(t),c^{-1}\eta(t)) = %\\
  \scalarat {q(t)} {\log(1+c^{-1}\eta) - \expectat q
    {\log(1+c^{-1}\eta)} - c^{-1}\eta}{\velocity q(t)}+\\ \scalarat {q(t)}
   {\Derivby t c^{-1} \eta(t)} {\log(1+c^{-1}\eta) - \expectat q {\log(1+c^{-1}\eta)}} \ .
\end{multline*}
Substitution gives
\begin{gather*}
    \Grad H^{a,b,c}(q,\eta) = c\left(a^{-1}\left(\log(1+c^{-1}\eta) - \expectat q
      {\log(1+c^{-1} \eta)}\right) - c^{-1} \eta - b \Grad f(q)\right)\\
    \Grad_{\text{m}} H^{a,b,c}(q,\eta) = a^{-1}\left(\log(1+c^{-1}\eta) - \expectat q {\log(1+c^{-1}\eta)}\right)\\
   \end{gather*}
The Hamilton equations are
\begin{equation}\label{eq:H-1}
  \left\{
    \begin{aligned}
      \Derivby t \eta(t) &= - c\left(a^{-1}\left(\log(1+c^{-1}\eta) - \expectat q
      {\log(1+c^{-1} \eta)}\right) - c^{-1} \eta - b \Grad f(q)\right)
  \\
\velocity q(t) &=  a^{-1}\left(\log(1+c^{-1}\eta) - \expectat q {\log(1+c^{-1}\eta)}\right)   \end{aligned}
\right. \ .
\end{equation}

\begin{remark}
There is a way, other than the Hamilton equations, to write a
first-order system of differential equations equivalent to the
second-order Euler-Lagrange \cref{eq:EL-1}. We have found in \cref{eq:Dbyt-chi} that
the Euler-Lagrange \cref{eq:EL-0} can be written as
\begin{equation*}
  \frac {\chi_a(t)}{q(t)} \velocity \chi_a(t) - \velocity q(t) = a^{-1} \left(\frac
    {\chi_a(t)}{q(t)} - 1\right) - \velocity q(t) - b \Grad f(q(t)) \ ,  
\end{equation*}
which simplifies to
\begin{equation*}
  \chi_a(t) \velocity \chi_a(t) = a^{-1} (\chi_a(t) - q(t)) - b q(t)
  \Grad f(q(t)) \ ,
\end{equation*}
and, in turn, provides a remarkably simple system of replicator
equations,
\begin{equation}\label{eq:REP-1}
  \left\{
    \begin{aligned}
      \dot \chi_a(t) &= a^{-1}(\chi_a(t) - q(t)) - b q(t) \Grad f(q(t)) \\
      \dot q(t) &= q(t) \left(\log \frac {\chi_a(t)} {q(t)} - \expectat q {\log \frac {\chi_a(t)}
        {q(t)} }\right) 
    \end{aligned} \right. \ .
\end{equation}
Notice that the vector field is null if, and only if, $\chi_a = q$ and
$\Grad f(q) = 0$.
\end{remark}

%\subsection{Reduction to ordinary differential equations}

We proceed now to express the equations we have obtained in the
ordinary Euclidean space. After computing the transports in the
right-hand side, the Euler-Lagrange \cref{eq:EL-1} becomes
\begin{multline}\label{eq:EL-2}
  a \acceleration
  q(t) + \velocity q(t) =  a^{-1}\left(\euler^{-a\velocity q(t) +
      K_{q(t)}(a \velocity q(t))} - \expectat {q(t)} {\euler^{-a\velocity q(t) +
      K_{q(t)}(a \velocity q(t))}}\right)
- \\
b \left(\euler^{-a\velocity q(t) +
      K_{q(t)}(a \velocity q(t))} \Grad f(q(t)) - \expectat {q(t)} {\euler^{-a\velocity q(t) +
        K_{q(t)}(a \velocity q(t))} \Grad f(q(t)) }\right) \ .
\end{multline}
Notice the common constant factor
\begin{equation*}
  \euler^{K_{q(t)}(a \velocity q(t))} = \expectat {q(t)}
      {\euler^{a\velocity q(t)}}
\end{equation*}
in each term.

There are many ways to rewrite \cref{eq:EL-2} as a system of ordinary differential equations in $\reals^{2N}$. An immediate option is to introduce the variables $q$ and $v =
\velocity q$, in which case the solution will stay in
the Grassmannian manifold $\sum_x q(x) v(x) = 0$. 
It holds $\derivby t q(t) = q(t) v(t)$.  The acceleration is
\begin{equation*}
\acceleration q(t) = \derivby t v(t) - \expectat {q(t)} {\derivby t
  v(t)} = \dot v(t) + \expectat {q(t)} {v(t)^2} \ .
\end{equation*}
An explicit expression for \cref{eq:EL-2} as a system of $2N$
ordinary differential equations is given in \cref{KLode}. The solution of the ODEs system is plot in \cref{simp2}.

%\goffredo{TO BE MOVED in supplementary material what follows:
%The left-hand side of \cref{eq:EL-2} becomes
%$a\left(\dot v(t) + \expectat {q(t)}{v(t)^2}\right) + v(t)$, while the
%right-hand side becomes
%\begin{multline*}
%  a^{-1} \expectat {q(t)} {\euler^{av(t)}} \left(\euler^{-av(t)} -
%    \expectat {q(t)} {\euler^{-av(t)}}\right) - \\ b \expectat {q(t)}
%  {\euler^{av(t)}} \left(\euler^{-av(t)}\Grad f(q(t)) -
%    \expectat {q(t)} {\euler^{-av(t)}\Grad f(q(t))}\right) \ .
%\end{multline*}
%Writing the expected values as sums, we obtain a system of $2N$
%ordinary differential equations (see cf. \cref{KLode} in Supplementary Material). The system could be further reduced
%to $2(N-1)$ equations between independent variables by using one of
%the possible parameterizations of the Grassmanian manifold.
%}

The Hamiltonian equations \eqref{eq:H-1} form a differential system in
the two variables $q,\eta$ in the dual statistical bundle, which is an
open subset of the Grassmanian manifold. The solution curve and its
derivatives can be expressed in the global space in which the dual
bundle is embedded by observing that
$t \mapsto (q(t),\eta(t)) \in \mixbundleat \mu \subset \reals^\Omega
\times \reals^\Omega$ and using %the substitutions
$\Derivby t \eta(t) = \frac{\dot q(t)}{q(t)} \eta(t) + \dot \eta(t)$,
$\velocity q(t) = \frac{\dot q(t)} {q(t)}$.

\section{Application to Accelerated Optimization}\label{sec:jordan}

As a final example of statistical Lagrangian dynamics, we consider the case of a damped mass-spring system on the probability space, defined via a time-dependent parameterized KL Lagrangian. %with explicit dependence on time. 
This choice is motivated by a series of recent interesting results in optimization, where a time-dependent family of so-called \emph{Bregman Lagrangians} \cite{WibisonoE7351} is introduced to derive a variational approach to accelerated optimization methods.
 
 While the geometric setting in \cite{WibisonoE7351} is a generic Hessian manifold over a convex set in $\reals^d$, our goal is to reproduce such a derivation on the statistical bundle, as to provide a first consistent description of accelerated optimization on the dually-flat geometry of the exponential manifold. Recent related work on the accelerated gradient flow for probability distributions can be found in \cite{pmlr-v97-taghvaei19a}, \cite{wang2020accelerated}, and some relevant references therein. 
 
\subsection{Damped KL Lagrangian}

On the statistical bundle, let us consider a damped Lagrangian given by the difference of time-scaled KL divergence and potential function, multiplied by an overall time-dependent damping factor,
  \begin{equation}
 \label{eq:breglagr}
 \begin{aligned}
   L(q,w,t) &=  \euler^{\gamma_t} \left(
     \euler^{\alpha_t} D(q, e_{q}(\euler^{-\alpha_t}w)) -
     \euler^{\alpha_t+\beta_t} f(q) \right) \\
    &= \euler^{\alpha_t+\gamma_t}\,\left( \expectat{q}{\logof{\frac{1}{\expof{\euler^{-\alpha_tw}-K_q(\euler^{-\alpha_t}w)}}}}- \euler^{\beta_t} f(q) \right)\\
    &= \euler^{\alpha_t+\gamma_t}\,\left(K_q(\euler^{-\alpha_t}w)- \euler^{\beta_t} f(q) \right) \ .
  \end{aligned}
\end{equation}

For each fixed $t$, the time-dependent Lagrangian above is an instance of the standard Lagrangian \cref{eq:standard-lagrangian} with
\begin{equation*}
  a = \euler^{-\alpha_t} \ , \quad b = \euler^{\alpha_t+\beta_t} \ , \quad c =
  \euler^{\gamma_t} \ , 
\end{equation*}
which reproduces, on the statistical bundle, the time-dependent family of Bregman Lagrangians proposed in \cite{WibisonoE7351}. 
As in \cite{WibisonoE7351}, we assume $\alpha_t, \beta_t, \gamma_t: I \to \mathbb{R}$  to be continuously differentiable functions of time. The overall damping factor $\gamma_t$ is responsible for the dissipative behaviour of the Lagrangian system; $\beta_t$ provides the potential $f$ with an explicit time dependence; finally, $\alpha_t$ defines a scaling in time of the score velocity. 

In our setting, the scaling of the score is associated to a time-dependent lift to the statistical bundle. In the  exponential map, we consider a time-dependent scaling of the shift vector, such that $\chi=e_q(\euler^{-\alpha_t}w)$ and $s_p(\chi)=u+\euler^{-\alpha_t} v \in \expfiberat p \mu$, with $\alpha_t: I \to \mathbb{R}$ smooth, $I \subset \mathbb{R} $ open time interval.
With this choice the KL divergence reads 
\begin{equation*}
D \colon I \times \expbundleat \mu \ni (q,w,t) \mapsto D(q,e_q(\euler^{-\alpha_t}w)) \in \reals \ .
\end{equation*}
%The time dependent divergence function $\KL {q}{e_q(\euler^{-\alpha_t}w)} $ is not closed under time dilation. This is an important property for a Lagrangian, which leads to action functional invariant under time-reparameterization. To see that, let us introduce a formal time parameter $\tau$ and elevate $t$ formally to the rank of a dynamical variable, such that we have $q(\tau)$ and $t(\tau)$. 
%We have, 
% \begin{multline*}
 % A(q(\tau), t(\tau)) = \int_{0}^{1} \dot{\tau}^{-1}\, d \tau\, \KLL{q(t(\tau))}{e_q(\euler^{-\alpha_{t(\tau)}}\velocity q(t(\tau)) \dot{\tau})} \\
 % = \int_{0}^{1} \dot{\tau}^{-1}\, d \tau\, \KLL{q(t(\tau))}{e_q(\euler^{-\tilde{\alpha}_{t(\tau)}}\velocity q(t(\tau))) }\ ,
%\end{multline*}
%where $dt(\tau)=\derivby \tau t \,d\tau= \dot{\tau}^{-1}\, d \tau$, and $\tilde{\alpha}_{t(\tau)}={\alpha}_{t(\tau)}+\log \dot{\tau}$.

The overall scaling by the inverse factor  $\euler^{\alpha_{t}}$ makes the divergence closed under time-dilation and leads to a time-reparameterization invariant \cite{souriau:1970, 10.1007/978-3-030-80209-7_80, Chirco:2015bps} action 
 
 \begin{multline*}
     A(q(\tau), t(\tau)) = \int_{0}^{1} \dot{\tau}^{-1}\, d \tau\, \euler^{\alpha_{t(\tau)}}\, \euler^{\gamma_{t(\tau)}}\,\left[ D(q(t(\tau)),e_q(\euler^{-\alpha_{t(\tau)}}\velocity q(t(\tau))\,\dot{\tau} ))- \euler^{\beta_{t(\tau)}} f(q)\right]\\ = \int_{0}^{1} d \tau\, \euler^{\tilde{\alpha}_{\tau}}\,\euler^{\gamma_\tau}\, \left[ D(q(\tau), e_q(\euler^{-\tilde{\alpha}_{\tau}}\velocity q(\tau)) )- \euler^{\beta_\tau}\,f(q) \right]\ ,
 \end{multline*}
 where we set $\tilde{\alpha}_\tau=\alpha_t- \logof{\dot{\tau}}$.
 
It follows directly from \cref{eq:standard-hamiltonian} that the Hamiltonian is
\begin{equation}\label{eq:dampham}
  H(q,\eta,t) = \euler^{\alpha_t+\gamma_t} \left(\expectat q
    {(1+\euler^{-\gamma_t}\eta)\log(1+\euler^{-\gamma_t}\eta)} +
    \euler^{\beta_t} f(q)\right) \ . 
\end{equation}

The gradients of the Lagrangian have been already computed in \cref{eq:grad-standard-lagrangian},
\begin{equation*}
  \begin{aligned}
     \Grad L(q,w,t) &= \euler^{\gamma_t}\left(\euler^{\alpha_t}\left(\frac{e_q(\euler^{-\alpha_t}w)}{q} - 1\right) -
w - \euler^{\alpha_t+\beta_t} \Grad f(q)\right) \\
    \Grad_{\text{e}} L(q,w,t) &= \euler^{\gamma_t}\left(\frac{e_q(\euler^{-\alpha_t}w)}{q} - 1\right)
  \end{aligned}
\end{equation*}

The Euler-Lagrange equation is
\begin{multline*}
  \Derivby t \left(
    \euler^{\gamma_t}\left(\frac{e_q(\euler^{-\alpha_t}\velocity q(t))}{q} -
      1\right)\right) = \\
  \euler^{\gamma_t}\left(\euler^{\alpha_t}\left(\frac{e_q(\euler^{-\alpha_t}\velocity
        q(t))}{q} - 1\right) -
    \velocity q(t) - \euler^{\alpha_t+\beta_t} \Grad f(q(t))\right) \ ,
\end{multline*}
or, canceling the factor $\euler^{\gamma(t)}$,
\begin{multline}\label{eq:EL-time-dependent}
 \dot \gamma_t \left(\frac{e_q(\euler^{-\alpha_t}\velocity q(t))}{q} -
      1\right) +  \Derivby t \left(\frac{e_q(\euler^{-\alpha_t}\velocity q(t))}{q} -
      1\right) = \\
 \euler^{\alpha_t}\left(\frac{e_q(\euler^{-\alpha_t}\velocity
        q(t))}{q(t)} - 1\right) -
    \velocity q(t) - \euler^{\alpha_t+\beta_t} \Grad f(q(t)) \ .
\end{multline}
\

Let us compute the left-hand side. If we write $\chi(t) =
e_{q(t)}\left(\euler^{-\alpha_t \velocity q(t)}\right)$, then
\begin{equation*}
  \Derivby t 
    \left(\frac{\chi(t)}{q(t)} - 1
    \right) = \frac1{q(t)} \derivby t \left(\chi(t) - q(t)\right) =
    \frac{\chi(t)}{q(t)} \velocity \chi(t) - \velocity q(t) \ ,
\end{equation*}
where
\begin{multline*}
\velocity \chi(t) = \derivby t \left(\euler^{-\alpha_t} \velocity q(t) -
  K_{q(t)}(\euler^{-\alpha_t} \velocity q(t)) + \log q(t)\right) = \\
- \dot \alpha_t \euler^{-\alpha_t} \velocity q(t)  +
  \euler^{-\alpha_t} \derivby t \velocity q(t) - \derivby t
  K_{q(t)}(\euler^{-\alpha_t} \velocity q(t)) + \velocity q(t) = \\
  \left(1 - \dot \alpha_t \euler^{-\alpha_t}\right) \velocity q(t) +
  \euler^{-\alpha_t} \acceleration q(t) + \euler^{-\alpha_t} \expectat
  {q(t)} {\derivby t \velocity q(t)} - \derivby t
  K_{q(t)}(\euler^{-\alpha_t} \velocity q(t)) \ . 
\end{multline*}
It follows that
\begin{equation*}
\Derivby t 
    \left(\frac{\chi(t)}{q(t)} - 1
    \right) = \mtransport {\chi(t)}{q(t)} \etransport {q(t)}{\chi(t)} \left(\left(1 - \dot \alpha_t \euler^{-\alpha_t}\right) \velocity q(t) +
  \euler^{-\alpha_t} \acceleration q(t)\right) - \velocity q(t) \ ,  
\end{equation*}
and, in turn,  the Euler-Lagrange \cref{eq:EL-time-dependent} becomes
\begin{multline*}
  \dot \gamma_t \left(\frac{\chi(t)}{q(t)} - 1 \right) + \mtransport {\chi(t)}{q(t)} \etransport {q(t)}{\chi(t)} \left(\left(1 - \dot \alpha_t \euler^{-\alpha_t}\right) \velocity q(t) +
  \euler^{-\alpha_t} \acceleration q(t)\right) - \cancel{\velocity q(t)} = \\
 \euler^{\alpha_t}\left(\frac{\chi(t)}{q(t)} - 1\right) -
    \cancel{\velocity q(t)} - \euler^{\alpha_t+\beta_t} \Grad f(q(t)) \ .
  \end{multline*}

The equation above can be rearranged to read
\begin{multline*}
  \euler^{-\alpha_t} \acceleration q(t) +  \left(1 - \dot \alpha_t
    \euler^{-\alpha_t}\right) \velocity q(t) = \\
  (\euler^{\alpha_t}-\dot \gamma_t) \left(- \frac{q(t)}{\chi(t)} +
    \expectat {q(t)} {\frac{q(t)}{\chi(t)}}\right) - \\
     \euler^{\alpha_t+\beta_t} \left(\frac{q(t)}{\chi(t)}\Grad f(q(t)) -
       \expectat {q(t)} {\frac{q(t)}{\chi(t)}\Grad f(q(t))}\right) 
 \end{multline*}
\

We shall now see the Euler-Lagrange equation above as the solution of an optimization problem on the simplex, where the potential $f(q)$ represents the objective function to be minimized, with a \emph{proximity} condition induced by the KL divergence. The explicit time-dependence of the Lagrangian is the fundamental ingredient in order for the dynamical system to dissipate energy and relax to a minimum of the potential,  hence to a minimum of the objective function.

\begin{remark}
In the dynamics induced by the Bregman Lagrangian analogue to \cref{eq:breglagr} in $I \subset \mathbb{R} \times \mathbb{R}^{2d}$, \cite{WibisonoE7351} propose a set of \emph{ideal} scaling conditions $( \dot{\beta} \le e^{\alpha_t}, \dot{\gamma}= e^{\alpha_t})$, by which solutions of the Euler-Lagrange equations reduce to vanishing-step-size-limit trajectories of the accelerated gradient optimization schemes (see also \cite{JMLR:v17:15-084,NIPS2015_5843, Attouch2018}).
%In particular, an example of optimal convergence rate for the continuum limit of Nesterov-like accelerated gradient schemes \cite{10029946121} is reproduced under the following ideal scaling conditions \cite{WibisonoE7351}
Applying the same ideal scaling conditions to the KL Euler-Lagrange equations on the statistical bundle leads to the simplified equation
\begin{equation}\label{bobo}
\acceleration q(t)  +(\euler^{\alpha_t}-\dot{\alpha}_t) \velocity q(t)= -
     \euler^{2\alpha_t+\beta_t} \left(\frac{q(t)}{\chi(t)}\Grad f(q(t)) -
       \expectat {q(t)} {\frac{q(t)}{\chi(t)}\Grad f(q(t))}\right)~.
\end{equation}
\     
%\begin{equation}\label{bobo}
%\acceleration q(t)  +(\euler^{\alpha_t}-\dot{\alpha}_t) \velocity q(t)- \expectat {e_p(z)}{\acceleration q(t) +\,(\euler^{\alpha_t}-\dot{\alpha}_t)\velocity q(t)}=  -\frac{\euler^{2\alpha_t+\beta_t}\, \Grad f(q(t))}{\euler^{ \euler^{-\alpha_t}\,\velocity q(t)-K_q(\euler^{-\alpha_t}\, \velocity q(t))}}.
%\end{equation}
%\

Along with \cite{WibisonoE7351}, the following choice of parameters, indexed by $p>0$,
%\begin{align}
\begin{equation}
\label{parametr}
\alpha_t= \log p-\log t  \qquad\quad %\\  \nonumber
\beta_t=  p \log t+\log C \qquad\quad %\\ \nonumber
\gamma_t=p \log t, %\nonumber
%\end{align}
\end{equation}
where $C>0$ is a constant, can be associated to the continuous-time limit of Nesterov's accelerated mirror descent (when $p=2$) and that of Nesterov's accelerated cubic-regularized Newton's method (when $p=3$) on the simplex.  Fig.~\ref{simp3} shows the solution of the system of ODEs for \cref{bobo}. The explicit expression of the ODEs system is given in \cref{jordano} in appendix.

%The motivation for the requirement of time.reparameterization invariance is apparent in this case. namely, by varying the parameter $p$, we can run over the invariant solution of the Euler-Lagrange equation with arbitrary velocity.

 %, the equation in \eqref{simpl_nd_ELs} corresponds to the continuous-time limit of Nesterov's accelerated mirror descent (when $p=2$) and that of Nesterov's accelerated cubic-regularized Newton's method (when $p=3$). Here, we use our construction to define the system on the statistical bundle. 
 
\end{remark}

\begin{figure}
\begin{center}
\includegraphics[width=12cm]{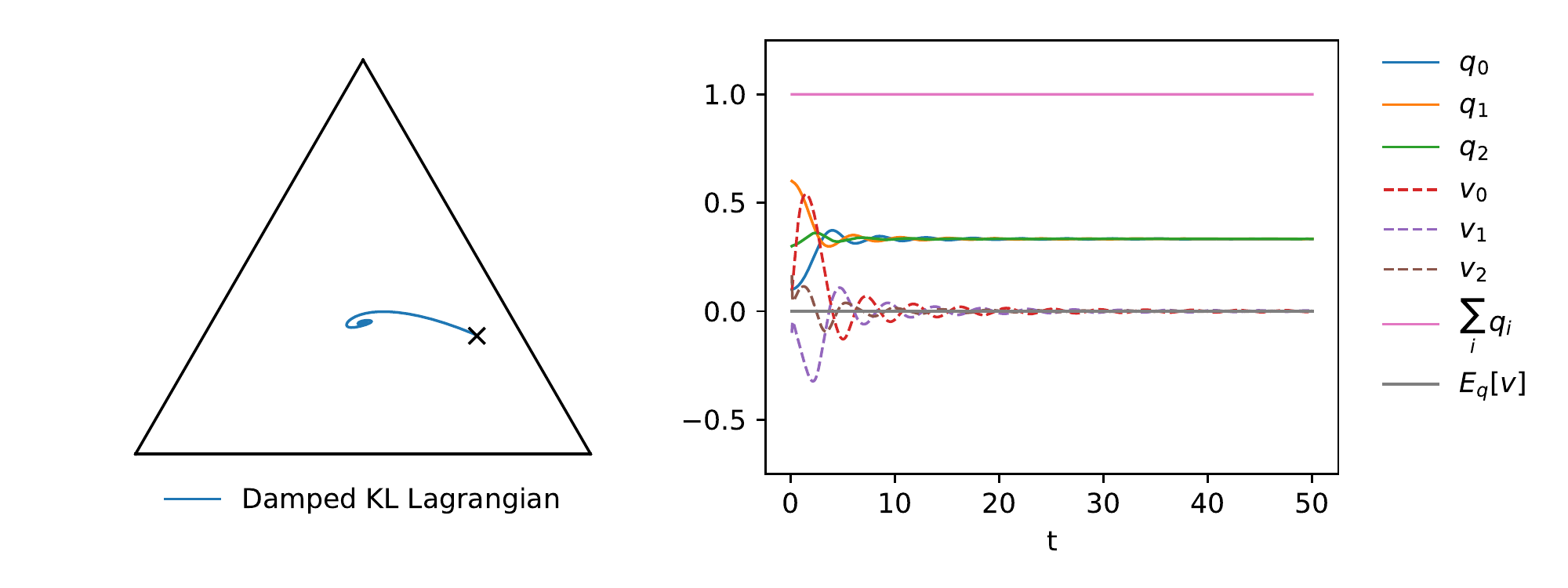}
\caption{Solution of the Euler-Lagrange equation for the damped KL Lagrangian in \cref{eq:breglagr}, under the ideal scaling condition, with the specific choice of parameterization given in \eqref{parametr} (with $p=2, C=0.5$). On the right panel, we see how the components of the probability oscillate toward the optimal value which minimize the potential. Accordingly, the velocities oscillate toward their vanishing expected value. The explicit expression of \eqref{bobo} as a system of ODEs is given in \cref{jordano} of the appendix.
}\label{simp3}
\end{center}
\end{figure}

\subsection{Damped KL Hamiltonian}
\label{example:hamiltonian}

The momentum corresponding to the \emph{damped} KL Lagrangian is given by
\begin{equation}\label{mom}
 {\eta} =  \euler^{\gamma_t}\, \left(\frac{e_q(\euler^{-\alpha_t}\,w)}{q}-1\right) \in I \times  \mixfiberat q \mu  \ ,
\end{equation}
corresponding to a time-dependent damping of the mechanic momentum associated to the free KL Lagrangian 
\begin{equation*}\label{tscaledKL}
 L(q,w,t)=\euler^{\alpha_{t}}\,D(q,e_q(\euler^{-\alpha_t}\,w)) \ .
\end{equation*}

Now, for
$(1+\euler^{-\gamma_t}\, {\eta})q = e_q(\euler^{-\alpha_t}\,w)$, we can use the exponential chart at $q$ to easily invert the Legendre transform and solve for the velocity $w$.  We have
\begin{multline*}
w({\eta})=(\Grad L_q)^{-1}( {\eta})= \euler^{\alpha_t}\, s_q((1+\euler^{-\gamma_t}\,{\eta})\, q) = \\ \euler^{\alpha_t}\,\Big(\log(1+\euler^{-\gamma_t}\,{\eta})
- \expectat q {\log(1+\euler^{-\gamma_t}\,{\eta})} \Big) \ .
\end{multline*}

Thereby, we can write the KL Hamiltonian as
\begin{multline*}
    H(q,{\eta},t) =  \euler^{\alpha_t}\, \scalarat q {{\eta}}{\Big(\log(1+\euler^{-\gamma_t}\,{\eta})
- \expectat q {\log(1+\euler^{-\gamma_t}\,{\eta})} \Big)}+\\
- \euler^{\alpha_t+\gamma_t}\,K_q\Big(\log(1+\euler^{-\gamma_t}\,{\eta})
- \expectat q {\log(1+\euler^{-\gamma_t}\,{\eta})} \Big)+ \euler^{\alpha_t+\beta_t+\gamma_t} f(q)  \\
=  \euler^{\alpha_t+\gamma_t}\, \left( \expectat q {\euler^{-\gamma_t}\,\eta \, \Big(\log(1+\euler^{-\gamma_t}\,{\eta})
- \expectat q {\log(1+\euler^{-\gamma_t}\,{\eta})} \Big)}+ \right.\\
\left.- K_q\Big(\log(1+\euler^{-\gamma_t}\,{\eta})
- \expectat q {\log(1+\euler^{-\gamma_t}\,{\eta})} \Big)\right)
+ \euler^{\alpha_t+\beta_t+\gamma_t} f(q)\\
= \euler^{\alpha_t+\gamma_t}\, \Big(\expectat q {(1+\euler^{-\gamma_t}\,{\eta}) \, \log(1+\euler^{-\gamma_t}\,{\eta})} + \euler^{\beta_t} f(q)\Big) =\\ \euler^{\alpha_t+\gamma_t}\, \Big(D(e_q(\euler^{-\alpha_t}w),q) + \euler^{\beta_t} f(q)\Big) \ ,
\end{multline*}

as presented in \eqref{eq:dampham}.

\begin{remark}
In classical mechanics, we normally identify the Hamiltonian with the total energy of the system, given by the sum of the kinetic and potential energy. In facts, the kinetic energy is the conjugate to the Lagrangian kinetic energy. This is apparent in the generalised case of the KL, where the symmetry of the standard quadratic form is generalised to a conjugacy relation with respect to the dual pairing. In the finite dimensional case, where the statistical bundle
coincides with the dual statistical bundle, the mechanic interpretation is then preserved.
\end{remark}

\begin{figure}
\begin{center}
\includegraphics[width=12cm]{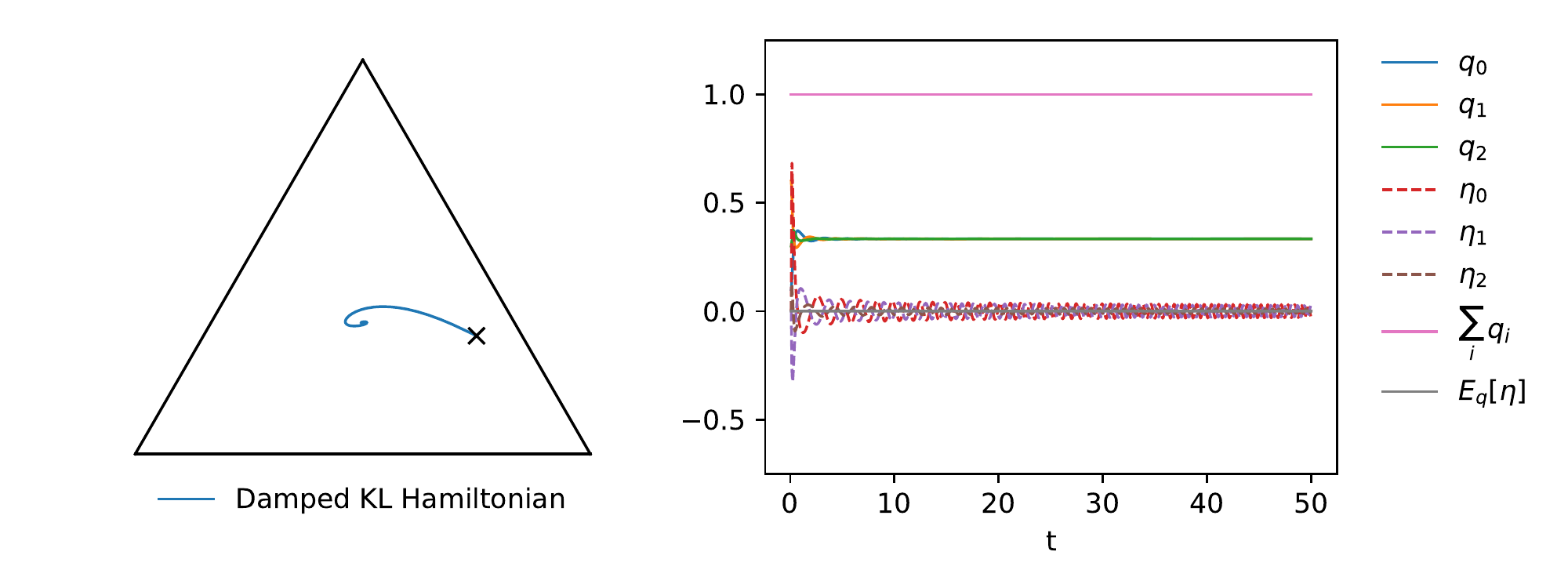}
\caption{Solution of the damped KL Hamiltonian system ODEs associated to \cref{eq:Hamiltondamp}. We use same ideal scaling condition, parameterization ($p=2, C=0.5$), and initial conditions of the KL Lagrangian system in~\cref{simp3}. On the right panel, we see that the damping of the probability densities is faster than the one shown by the KL Lagrangian system. The momenta oscillate toward their vanishing expected value, but they seem to tend to a finite amplitude. The explicit expression of the system of ODEs is given in \cref{eq:Hamiltondampeta-sm}.}\label{simp4}
\end{center}
\end{figure}

The {Hamilton equations} read
 \begin{equation}
  \label{eq:Hamiltondamp}
  \left \{
  \begin{aligned}
  \Derivby t {\eta} &=  \euler^{\alpha_t}\, {\eta} - \euler^{\alpha_t+\gamma_t}\left(\log(1+\euler^{-\gamma_t}\,{\eta}) - \expectat {q(t)}
    {\log(1+\euler^{-\gamma_t}\,{\eta})}\right) + \\ 
    & \qquad \qquad \qquad \qquad \qquad \qquad \qquad \qquad \qquad  - \euler^{\gamma_t+\alpha_t+ \beta_t} \Grad f(q) \\
     \velocity q(t) & = \euler^{\alpha_t}\,\left(\log(1+\euler^{-\gamma_t}\,{\eta}) - \expectat {q(t)}
  {\log(1+\euler^{-\gamma_t}\,{\eta})}\right) \  .
  \end{aligned}
  \right.
\end{equation}

As for \cref{eq:H-1}, we can express the system in the global space in which the dual
bundle is embedded by using 
$\Derivby t \eta(t) = \frac{\dot q(t)}{q(t)} \eta(t) + \dot \eta(t)$,
$\velocity q(t) = \frac{\dot q(t)} {q(t)}$ (see cf.~\cref{eq:Hamiltondampeta-sm}), \cref{jordano} in the appendix).

\begin{remark}
It is interesting to note that the momentum derived from the parametric Lagrangian in \eqref{eq:breglagr} (a generalization of the Kanai--Caldirola Lagrangian (see for example \cite{Bateman1931, Herrera1986}) is nothing but the mechanic conjugate momentum to $q(t)$, defined in Proposition \ref{ex:mech-cumulant-lagrangian} for the undamped system, multiplied by a scaling factor $\euler^{\gamma_t}$. In fact, despite giving the correct equation of motion for the damped harmonic oscillator, a Lagrangian of the type $L=\euler^{\gamma_t}\left(T-V \right)$ has been shown to rather describe a harmonic oscillator with variable mass \cite{doi:10.1063/1.524148}. This is an interesting perspective to be explored for understanding the role of inertia and acceleration in the momentum approaches for optimization.
\end{remark}

\section{Discussion}\label{sec:discussion}

This paper has the character of a first full analysis of a new formalism to be of interest in the study of the evolution of probability densities on a finite sample space.
We showed that a fully
non-parametric presentation of the Lagrangian and Hamiltonian dynamics
can be realized on the statistical bundle. Our version of the mechanical formalism applies to a set up that is different
from the standard one. Namely, it acts on that specific version of the
tangent bundle of the open probability simplex that has the most natural interpretation in terms of statistical quantities.

All the underlying mechanical concepts, such as velocity, parallel transport, accelerations, second-order equations, oscillation, damping, receive a specific statistical interpretation and are, in some cases, related with non-mechanical features, such as a divergence. Several simple examples illustrate the numerical implementation and graphical illustration of the results, which are both important in statistical and machine learning applications, as well as for a deeper understanding of accelerated methods %(see e.g., \cref{simp5}) 
and  the construction of new geometric discretization schemes for optimization algorithms (see, for example, \cite{zhang2018riemannian, alimisis2019continuoustime}).

The use of formal mechanical concepts in statistical modelling has been unusual in the applied literature, and we believe this paper could prompt for a change of perspective. On the other side, the non-parametric treatment clearly shows the relation with the modelling used in Statistical Physics that is, Boltzmann-Gibbs theory. 
The case of a generic exponential family is obtained by considering the vector space generated by the constant 1 and a finite set of
sufficient statistics, $B = \spanof{1,u_1,\dots,u_n}$. In this case
the fibers are $B_p=\setof{u \in B}{\expectat p u = 0}$. The
   exponential transport acts properly, while the mixture transport is
   computed as
   \begin{equation*}
     \scalarat q \eta {\etransport p q w} = \scalarat p {\Pi_p
       \mtransport q p \eta} w \ ,
   \end{equation*}
where $\Pi_p$ is the $L^2(p)$ orthogonal projection onto $B_p$. 

As a further direction for future research, the dynamical system induced by the Hamilton equations on the dual bundle suggests the study of the measures on the statistical bundle and their evolution. The consideration of such measures provides an interesting extension of the Bayes paradigm in that there is a probability measure on the simplex and a transition to a measure on the fiber. 
Future work should consider the extension of the statistical bundle mechanics to the continuous state space, which requires the definition of a proper functional set-up. One option would be to model the fibers of the statistical bundle with an exponential Orlicz space. In such a case, the fibers of the dual statistical bundle should be modeled as the pre-dual space. Many other options are available, in particular, Orlicz-Sobolev Banach spaces, or Fr\`echet spaces of infinitely differentiable densities.
An extension to the continuous state space and to arbitrary exponential families would allow a broad application of the information geometric formalism for accelerated methods to the optimization over statistical models, in particular in large dimensions. Three examples of such applications are the optimization of functions defined over the cone of the positive definite matrices, the minimization of a loss function for the training of neural networks, and the stochastic relaxation of functions defined over the sample space.
 
\section*{Acknowledgements}
G.C.~and L.M.~are supported by the DeepRiemann project, co-funded by the European Regional Development Fund and the Romanian Government through the Competitiveness Operational Programme 2014-2020, Action 1.1.4, project ID P\_37\_714, contract no. 136/27.09.2016, SMIS code 103321. G.P.~is supported by de Castro Statistics, Collegio Carlo Alberto, Turin, Italy. He is a member of GNAMPA-INDAM.

\newpage

%\section*{Supplementary Material}
\appendix

\section{Covering}
\label{sec:great-circles}

The mapping \eqref{eq:spheretosimplex} of the positive part of the sphere to the open simplex is actually a restriction of a smooth mapping of the full sphere to $\reals^N$ whose image is the closed simplex. The unrestricted mapping provides a topological covering of the simplex. Hence, the geodesic dynamics of the sphere is naturally mapped to a dynamics of the simplex. The following example shows how this particular behaviour is expressed in our framework.

The expression in the chart centered at $p$ of the quadratic
Lagrangian $L(q,w) = \frac m2 \scalarat q w w$ follows from
\cref{eq:K3},
\begin{equation}\label{eq:free-particle-in-chart}
L_p(u,v) = \frac{m}2 \scalarat {e_p(u)} {\etransport p {e_p(u)}
v} {\etransport p {e_p(u)} v} =  \\ \frac{m}2 D^2 K_p(u)[v,v]  \ ,
  \end{equation}
where $q = e_p(u)$ and $w = \etransport p q v$.

The derivative of $L_p(u,v)$ in the direction $(h,k)$ is
\begin{multline}
  D L_p(u,v)[(h,k)] = \\ 
  \frac m2 D^3K_p(u) [v,v,h] + m D^2K_p(u)[v,k] = 
  \frac m2 \expectat q {w^2 \etransport p q h} + m \expectat q {w
    \etransport p q k} = \\ \frac m2 \scalarat q {w^2 - \expectat q {w^2}}
  {\etransport pq u} + m \scalarat q w {\etransport p q k}\ ,
\end{multline}
where we have used \cref{eq:K4}.

The two components of the natural gradient are
\begin{equation*}
  \Grad L(q,w) = \frac m2 (w^2 - \expectat q {w^2}) \ , \quad
  \Grad_e L(q,w) = m w \ .
\end{equation*}
With $w(t)=\velocity q(t)$, Euler-Lagrange equation is
\begin{equation*}
  \Derivby t \velocity q(t) = \frac 12 \left({\velocity q(t)}^2 - \expectat {q(t)}{{\velocity q(t)}^2}\right) \ .
\end{equation*}
Notice that the $\velocity q(t)^2 - \expectat {q(t)}{{\velocity
    q(t)}^2}$ belongs to the fiber $\mixfiberat {q(t)} \mu$ of the
dual bundle.

Let us express this equation as a system of second-order ODEs. By recalling that here $D/dt$ is the dual (mixture) covariant derivative, we find that
\begin{equation*}
  \frac{\ddot q(t)}{q(t)} =
  \frac12 \left(\frac{\dot q(t)}{q(t)}\right)^2 - \frac12 \expectat
  {q(t)} {\left(\frac{\dot q(t)}{q(t)}\right)^2} \ .   
\end{equation*}

If $\Omega = \set{1,\dots,N}$, this is a system of $N$ 2nd order ODE,
\begin{equation*}
  \ddot q(j;t) = \frac {\dot q(j;t)^2} {2q(j;t)} - \frac{q(j;t)}{2N}
  \sum_{i=1}^N \frac{\dot q(i;t)^2}{q(i;t)} \ , \quad j = 1,\dots, N\ . 
\end{equation*}

A second option is to take as an indeterminate $\ell(t) = \log
q(t)$. It follows that
\begin{equation*}
  \dot \ell(t) = \velocity q(t) \quad \text{and} \quad \Derivby t
  \velocity q(t) = \ddot \ell(t) + \dot \ell(t)^2 \ ,
\end{equation*}
and the equation becomes
\begin{equation*}
  \ddot\ell(j;t) = - \frac12 \dot \ell(j;t)^2 - \frac1{2N}
  \sum_{i=1}^N \euler^{\ell(i;t)} \dot\ell(i;t)^2 \ . 
\end{equation*}

There is a closed form solution. In fact, the solution is the image of the Riemannian exponential on the sphere of radius 1 by a proper transformation.

The mapping
\begin{equation*}
\expbundleat \mu (q,w) \mapsto ((q/N)^{1/2},(q/N)^{1/2} w) = (\alpha,\beta) \in TS_1 \ ,
\end{equation*}
{where $TS_1$ is the tangent bundle of the unit sphere $S_1$}, is well defined, is 1-to-1 onto the positive quadrant, and preserves the respective metrics. In fact,
\begin{gather*}
  TS_{1+} \ni (\alpha,\beta) \mapsto (N\alpha^2,\alpha^{-1}\beta) \in
  \expbundleat \mu \ , \\
  \sum_x \alpha^2(x) = \sum_x q(x) \frac 1N = 1 \ , \\
  \sum_x \alpha(x) \beta(x)   = \sum_x w(x) q(x) \frac 1N = \expectat
  q w = 0 \ , \\
  \sum_x \beta_1(x) \beta_2(x) = \sum_x w_1(x) w_2(x) q(x) \frac1N =
  \scalarat q {w_1}{w_2} \ 
\end{gather*}
Notice that this is \emph{not} the Amari's embedding that maps
$T\Delta^\circ$ onto $TS_1$.

The exponential mapping on the Riemannian manifold $S_1$ is given by the geodesic defined for each $(\alpha,\beta) \in TS_1$ by
\begin{equation*}
 \alpha(t) = \cos(\normof \beta  t) \alpha +  \normof \beta ^ {-1} \sin(\normof
 \beta  t) \beta \ .
\end{equation*}
We have  $\alpha(0) = \alpha$,
\begin{gather*}
  \normof {\alpha(t)} ^ 2 = \cos^2(\normof \beta ^ 2 t) +  \normof \beta ^
  {-2} \normof \beta ^ 2  \sin^2(\normof \beta ^ 2 t) = 1 \ ,  \\
  \dot \alpha(t) = - \normof \beta \sin(\normof \beta t) \alpha +
  \cos(\normof \beta t) \beta \ , \quad \dot \alpha(0) = \beta \ , \\
  \dot \alpha (t) \cdot \alpha(t) = - \normof \beta \cos(\normof \beta
  t) \sin(\normof \beta t) \alpha \cdot \alpha + \normof \beta ^ {-1}
  \sin(\normof \beta t) \cos(\normof \beta t) \beta \cdot \beta  = 0
  \ , \\
  \normof {\dot \alpha(t)} ^2 = \normof \beta ^ 2 \sin^2(\normof \beta
    t) \normof \alpha^2 + \cos^2(\normof \beta t) \normof \beta ^ 2 =
    \normof \beta ^ 2 \ . 
  \end{gather*}
  We leave out the check it is a geodesic.

Given $(q,w) \in \expbundleat \mu$, let us apply the isometric
transformation, and define
\begin{multline*}
  q(t) = \\ N \left(\cos(\normof{(q/N)^{1/2} w}  t) (q/N)^{1/2} +
    \normof {(q/N)^{1/2} w} ^ {-1} \sin(\normof {(q/N)^{1/2} w}  t)
    (q/N)^{1/2} w \right)^2 = \\ q\left(\cos( \sigma(w)  t) +
     \sigma(w) ^ {-1} \sin( \sigma(w)  t)
   w \right)^2 \ ,
\end{multline*}
with
\begin{equation*}
  \sigma(w) = \normof {(q/N)^{1/2} w} = \sqrt{\sum _x w(x) q(x) \frac1N}  =
    \sqrt{\expectat q {w^2}} \ .
\end{equation*}

We assume that $t \in I$ and $q(t) > 0$. Let us compute the
velocity and the acceleration  of $t \mapsto q(t)$. It holds $q(0) = q$ and
\begin{gather*}
\velocity q(t) = \frac{2 (w \cos (\sigma t)-\sigma \sin (\sigma
  t))}{\sigma^{-1} w \sin (\sigma t) +\cos (\sigma t)} \\
\velocity q(t) ^2 = \frac{4 (w \cos (\sigma  t)-\sigma 
   \sin (\sigma  t))^2}{\left(\sigma^{-1} w \sin (\sigma t) +\cos (\sigma t)\right)^2}\\
\ddot q(t) / q(t) = -\frac{2 \sigma ^2 \left(\left(\sigma
   ^2-w^2\right) \cos (2 \sigma  t)+2
   \sigma  w \sin (2 \sigma 
   t)\right)}{(\sigma  \cos (\sigma 
 t)+w \sin (\sigma  t))^2} \\
\frac{\ddot q(t)}{q(t)} - \frac12 \velocity q(t)^2 = - 2 \sigma^{-2} \\
   \velocity q(t)^2 q(t) = 4 q (w \cos (\sigma  t)-\sigma  \sin
   (\sigma  t))^2 
\end{gather*}

It follows that the Riemannian acceleration on the Hilbert bundle
is null.
%
%\subsection{E-L eqns for the quadratic functional in $\chi$}
%
%\begin{equation*}
% \left\{ \begin{aligned}  \velocity \chi(t)  &= \velocity q(t) -\frac{1}{2}\Big(\velocity q(t)^2- \expectat {q(t)}{\velocity q(t)^2} \Big)-
% \expectat{\chi(t)}{\velocity q(t) -\frac{1}{2}\Big(\velocity q(t)^2- \expectat {q(t)}{\velocity q(t)^2} \Big)} \\ \velocity q(t) &= \log \frac {\chi(t)}{q(t)} -
%     \expectat {q(t)}{\log \frac {\chi(t)}{q(t)}}
%   \end{aligned} 
% \right. \ .
%\end{equation*}

\section{Entropy Flow}\label{entropy}
We compute the natural gradient of the entropy  $\entropyof q = - \expectat q {\log q}$ by using the Hessian formalism. Notice that, by definition, $\log q + \entropyof q \in \expfiberat q \mu$.

If
\begin{equation}
  t \mapsto q(t) = \euler^{v(t)-K_p(v(t))}\cdot p \ , \quad v(t) = s_p(q(t)) \ ,
\end{equation}
is a smooth curve in $\maxexpat \mu$ expressed in the chart centered at $p$, then we can write
  \begin{multline} \label{eq:HP}
    \entropyof {q(t)} = - \expectat {q(t)} {v(t) - K_p(v(t)) + \log p} = \\ K_p(v(t)) - \expectat {q(t)}{v(t) + \log p + \entropyof p} + \entropyof p = \\
K_p(v(t)) - D K_p(v(t))[v(t) + \log p + \entropyof p] + \entropyof p    \ ,
  \end{multline}
where the argument $v(t) + \log p + \entropyof p$ of the expectation belongs to the fiber $\expfiberat P \mu$ and we have expressed the expected value as a derivative by using Eq~\eqref{eq:K1}.

By using Eq~\eqref{eq:K1} and Eq~\eqref{eq:K3}, we see that the derivative of the entropy along the given curve is
\begin{multline*}
  \derivby t \entropyof {q(t)} = \derivby t K_p(v(t)) - \derivby t  D K_p(v(t))[v(t) + \log p + \entropyof p] = \\
  D K_p(v(t))[\dot v(t)] - D^2K_p(v(t))[v(t) + \log p + \entropyof p,\dot v(t)] - D K_p(v(t))[\dot v(t)] = \\ - \expectat {q(t)}{\etransport p {q(t)} (v(t) + \log p + \entropyof p)\etransport p {q(t)} \dot v(t)} \ .
\end{multline*}

We use then the identities
\begin{gather}
  v(t) + \log p + \entropyof p = \log q(t) + K_p(v(t)) + \entropyof p \ , \\ \etransport p {q(t)} \left(\log q(t) + K_p(v(t)) + \entropyof p\right) = \log q(t) + \entropyof {q(t)} \ , \\
  \etransport p {q(t)} \dot v(t) = \velocity q(t) \ ,
\end{gather}
to obtain
\begin{equation}
    \derivby t \entropyof {q(t)} = - \scalarat {q(t)} {\log q(t) + \entropyof {q(t)}}{\velocity q(t)} \  .
  \end{equation}
Hence, we can identify the gradient of the entropy in the statistical bundle as
  \begin{equation}\label{eq:gradH}
\Grad \entropyof q = -(\log q + \entropyof q) \ .    
  \end{equation}
   %Notice that the previous computation could have been done using
   %the exponential family $q(t) = e_p(tv)$.

  The integral curves of the gradient flow equation
\begin{equation*}
\velocity q(t) = \Grad \entropyof {q(t)}  
\end{equation*}
are exponential families of the form $q(t) \propto
q(0)^{\euler^{-t}}$. In fact, if we write the equation in $\reals^N$,
we get the quasi-linear ODE
\begin{equation*}
 \derivby t \log q(t) = -\log q(t) - \entropyof {q(t)} \ ,  
\end{equation*}
and, in turn,
\begin{equation*}
  \log q(t) = \euler^{-t} \log q(0) - \euler^{-t} \int_0^t \euler^s
  \entropyof {q(t)} \ ds \ .
\end{equation*}
The behaviour as $t \to \pm \infty$ and other properties follow easily~\cite{pistone:2013Entropy}.

Given a section $q \mapsto F(q) \in \expfiberat q \mu$, the variation of the entropy along the integral curves, $\velocity q(t) = F(q(t))$, is
\begin{equation*}
  \derivby t \entropyof{q(t)} =  \scalarat {q(t)} {\Grad \entropyof
    {q(t)}} {F(q(t))} = - \scalarat {q(t)} {\log q(t) + \Grad \entropyof
    {q(t)}} {F(q(t))} \ .
  \end{equation*}
  For example, the condition for entropy production becomes
  \begin{equation*}
    \scalarat {q} {\log q + \entropyof {q(t)}} {F(q)} = \expectat q {\log q F(q)} < 0 \ .
  \end{equation*}

\section{Simple Examples} \label{sm:computations}

Here, we have collected simple examples of Lagrangian and Hamiltonian
dynamics of the simplex.

\begin{example}[Quadratic Lagrangian]\label{ex:quadratic-lagrangian}
  If $L(q,w) = \frac12 \scalarat q w w$, then
  \begin{equation}\label{eq:quadratic-lagrangian-p}
    L\left(e_p(u),\etransport p {e_p(u)} v\right) = \frac12 \expectat {e_p(u)}
  {\left(\etransport p {e_p(u)} v\right)^2} = \frac12 \expectat p {\euler^{u -
      K_p(u)}\left(\etransport p {e_p(u)} v\right)^2} \ ,
  \end{equation}
  with derivative with respect to $u$ in the direction $h$ given by
  \begin{multline*}
 \frac12 \expectat p  {\euler^{u -
      K_p(u)} \etransport p {e_p(u)} h \left(\etransport p {e_p(u)} v\right)^2} +\\ \expectat p {\euler^{u -
      K_p(u)}\left(\etransport p {e_p(u)} v\right)(- \covat {e_p(u)} v
  h)} = \\ \frac12 \expectat q {w^2 \etransport p {e_p(u)} h} =
\frac12 \scalarat q {w^2 - \expectat q {w^2}}{\etransport p {e_p(u)} h}
  \ ,    
  \end{multline*}
which, in turn, identifies the natural gradient as $\Grad \frac12
\scalarat q w w = \frac12 (w^2 - \expectat q {w^2})$.
\end{example}

\begin{example}[Cumulant functional]\label{ex:cumulant-lagrangian}
If  $L(q,w) = K_q(w)$, then  
  \begin{multline}\label{eq:cumulant-lagrangian-p}
    L\left(e_p(u),\etransport p {e_p(u)} v\right) =
    K_{e_p(u)}\left(\etransport p {e_p(u)} v\right) = 
    \log \expectat {e_p(u)}  {\euler^{v - \expectat {e_p(u)} v}} = \\
    \log\expectat p {\euler^{u - K_p(u) + v - \expectat {e_p(u)} v}} =
    \log\expectat p {\euler^{u + v - K_p(u)- D K_p(u)[v] }} = \\
    K_p(u+v) - K_p(u) - D K_p(u)[v] \ .
  \end{multline}
  Notice that last member of the equalities is the Bregman divergence
  of the convex function $K_p$. The derivative with respect to $u$ in
  the direction $h$ is
  \begin{multline*}
    D K_p(u+v)[h] - D K_p(u)[h] - D^2K_p(u)[v,h] = \\ \expectat
    {e_p(u+v)} h - \expectat {e_p(u)} h - \expectat {e_p(u)}
    {\left(\etransport p {e_p(u)} v\right) \left(\etransport p {e_p(u)} h\right) } =
    \\ \expectat {e_p(u)} {\frac {e_p(u+v)}{e_p(u)} h} - \expectat {e_p(u)} h -
    \expectat q {w \left(\etransport p {e_p(u)} h\right)} = \\
 \expectat {e_p(u)} {\frac {e_p(u+v)}{e_p(u)} \left(\etransport p
     {e_p(u)} h\right)} - \scalarat  q {w} {\etransport p {e_p(u)} h}   \ .
  \end{multline*}
  The first term is
  \begin{multline*}
    \expectat {e_p(u)} {\euler^{v - (K_p(u+v)-K_p(u))}
      \left(\etransport p {e_p(u)} h\right)} = \\ \expectat {e_p(u)}
    {\euler^{v - (K_{e_p(u)}(\etransport p {e_p(u)} v) + D K_p(u)[v])}
      \left(\etransport p
        {e_p(u)} h\right)} = \\
    \expectat {e_p(u)} {\euler^{\etransport p {e_p(u)} v -
        K_{e_p(u)}(\etransport p {e_p(u)} v)} \left(\etransport p
        {e_p(u)} h\right)} = \\
    \expectat q {\euler^{w - K_q(w)} \left(\etransport p {e_p(u)}
        h\right)} = \scalarat q {\frac{e_q(w)}{q} - 1}{\etransport p
        {e_p(u)} h}  \ .
  \end{multline*}
In conclusion, $\Grad K_q(w) = \left(\frac{e_q(w)}{q} - 1\right) -
w$. The fiber gradient is easily seen to be 
$$\Grad_{\text{e}} K_q(w) = \frac{e_q(w)}{q} - 1 \ .$$ 

In the notations of Example \ref{ex:chi-trick},
$\chi(t) = e_{q(t)}(\velocity q(t))$, we have, for example,
\begin{multline*}
  \derivby t K_{q(t)}(\velocity q(t)) = \scalarat {q(t)} {\frac{\chi(t)}{q(t)}-1 - \velocity q(t)}{\velocity q(t)} + \scalarat {q(t)} {\frac{\chi(t)}{q(t)}-1}{\acceleration q(t)} = \\
  \expectat {\chi(t)} {\acceleration q(t) + \velocity q(t)} - \expectat {q(t)}{{\velocity q(t)}^2} \ .
\end{multline*}

This example shall be of interest for us because it is connected with
the KL divergence, $K_q(w) = D(q,{e_q(w)})$. 
\end{example}

\begin{example}[Conjugate cumulant functional]\label{ex:cumulant-hamiltonian}
The Hamiltonian
\begin{equation*}
 \mixbundleat \mu \colon (q,\eta) \mapsto H(q,\eta) = \expectat q {(1+\eta) \log (1 + \eta)} \ , \quad \eta > -1 \ ,   
\end{equation*}
is the Legendre transform of the cumulant function $K_q$,
\begin{equation*}
  H(q,\eta) = \scalarat q \eta {(\Grad K_q)^{-1}(\eta)} -
  K_q\left((\Grad K_q)^{-1}(\eta)\right) \ .
\end{equation*}
In particular, the fiber gradient of $H_q$ is $\Grad_{\text{m}} H(q,\eta) = \log(1+\eta) - \expectat q {\log(1+\eta)}$ which is the inverse of the fiber gradient of $K_q$. Notice that $r = (1+\eta)q$ is a density, and $D(r,q) = H(q,\eta)$. 

Let us compute the natural gradient. The expression of the Hamiltonian in the chart at $p$ is 
\begin{multline*}
 H_p(u,\zeta) = \expectat {e_p(u)} {\left(1 + \frac p {e_p(u)} \zeta \right) \log \left(1 + \frac p {e_p(u)} \zeta \right)}  = \\ \expectat p {\left(\frac{e_p(u)} p  + \zeta \right) \log \left(1 + \frac p {e_p(u)} \zeta \right)} \ .  
\end{multline*}
As, for $h \in \expfiberat p \mu$,
\begin{equation*}
  D \left(\frac{e_p(u)} p  + \zeta \right) [h]= \frac{e_p(u)}p
  \etransport p {e_p(u)} h \ \ \text{and} \ \  D_1 \left(1 + \frac
    p {e_p(u)} \zeta \right) [h]= - \frac p{e_p(u)} \zeta \etransport p
  {e_p(u)}     h \ ,
\end{equation*}
the derivative of $H_p$ with respect to $u$ in the direction $h$ is given by
\begin{multline*}
  D H_p(u,\zeta) [h]= \expectat p {\left(\frac{e_p(u)}p \etransport p
      {e_p(u)} h\right) \log \left(1 + \frac p {e_p(u)}
      \zeta \right)} + \\-
  \expectat p {\left(\frac{e_p(u)} p + \zeta \right) \left(1 + \frac p
      {e_p(u)} \zeta \right)^{-1}\frac p{e_p(u)} \zeta \etransport p
    {e_p(u)}     h} = \\
  \expectat q {\log(1+\eta)  \etransport p {e_p(u)} h} - \expectat q
  {\zeta \etransport p {e_p(u)} h}\ ,
\end{multline*}
hence $\Grad H(q,\eta) = \log(1+\eta) - \expectat q {\log(1+\eta)} - \eta$.
\end{example}

\begin{example}[Mechanics of the quadratic Lagrangian]\label{ex:mech-quadratic-lagrangian}\label{eg5}
  If $L(q,w) = \frac12 \scalarat q w w$, then the Legendre transform
  is $H(q,\eta) = \frac 12 \scalarat q \eta \eta$. The gradients are
  \begin{align*}
    \Grad H(q,\eta) &= -\frac12 \left(\eta^2 - \expectat q {\eta^2}\right)\\
    \Grad_{\text{m}} H(q,\eta) &= \eta\\
    \Grad L(q,w) &= \frac12 (w^2 - \expectat q {w^2}) \\
    \Grad_{\text{e}} L(q,w) &= w
  \end{align*}

  For $\velocity q=w \in \mixbundleat \mu$, the Euler-Lagrange equation is
  \begin{equation*}
    \Derivby t \velocity q(t) = \frac12 \left(\velocity q(t)^2 -
      \expectat {q(t)} {\velocity q(t)^2}\right) \ ,
  \end{equation*}
where the covariant derivative is computed in $\mixbundleat \mu$, that
is, $\Derivby t \velocity q(t) = \ddot q(t) / q(t)$. In terms of the
exponential acceleration $\acceleration q(t) = \ddot q(t) / q(t) - \left(\velocity q(t)^2 -
      \expectat {q(t)} {\velocity q(t)^2}\right)$, the Euler-Lagrange
      equation reads
      \begin{equation*}
        \acceleration q(t) = - \frac12 \left((\velocity q(t))^2 -
      \expectat {q(t)} {(\velocity q(t))^2}\right) \ ,
      \end{equation*}
while in terms of the Riemannian acceleration in \cref{eq:0acc}, it
holds $\acc q(t) = 0$.

The Hamilton equations are
\begin{equation*}
  \left\{
    \begin{aligned}
      \Derivby t \eta(t) &= \frac12 \left(\eta^2 - \expectat q {\eta^2}\right) \\
      \velocity q(t) &= \eta(t)
    \end{aligned}
\right. \ ,
\end{equation*}
with the covariant derivative again computed in $\mixbundleat \mu$.

The conserved energy is
\begin{equation*}
  H(q(t),\eta(t)) = \frac12 \scalarat {q(t)} {\velocity q(t)}{\velocity q(t)} = \frac12
  \expectat 1 {\frac {{\dot q(t)}^2}{q(t)}} \ .
\end{equation*}
%which reflects in the conservation of the \emph{Fisher information} (%see cf. \eqref{fisherinf}). 

In fact,
this variational problem has a closed-form solution which is the image of a
geodesic (great circle) on the sphere through an isometric covering from the tangent
bundle of the sphere to the statistical bundle, see
\cref{sec:great-circles}. It is interesting to note that this solution
is a periodic curve in the set of all densities, but consists of
different sections in $\maxexpat \mu$ because it is interrupted when it
touches tangentially the border of the simplex of probability densities. 
\end{example}

%\textcolor{red}{Picture: map between the great circle on the sphere and a curve in the simplex -where apply the transformation to both points and vectors before (maybe not before?) the solution is found- by mapping also the corresponding vector. Add the image. The norm is constant along the Riemannian geodesic (check in the graphs).}

%\bigskip++++++++++++++

\begin{example}[Mechanics of the cumulant Lagrangian]\label{ex:mech-cumulant-lagrangian}
If $L(q,w) = K_q(w)$, then its Legendre transform is $H(q,\eta) =
\expectat q {(1+\eta) \log (1+\eta)}$. This is an expression of the
dual divergence: as $\eta =
\left(\frac{r}{q}-1\right)$ with $r = e_q(w)$, then $H(q,\eta)= D(r, q)$, namely the relative entropy dual to the cumulant. 

The gradients are

  \begin{align*}
    \Grad H(q,\eta) &= \log(1+\eta) - \expectat q {\log(1+\eta)} - \eta\\
    \Grad_{\text{m}} H(q,\eta) &= \log(1+\eta) - \expectat q {\log(1+\eta)}\\
    \Grad L(q,w) &= \Grad K_q(w) = \left(\frac{e_q(w)}{q} - 1\right) -
w \\
    \Grad_{\text{e}} L(q,w) &= \Grad_{\text{e}} K_q(w) = \left(\frac{e_q(w)}{q} - 1\right) \ .
  \end{align*}

The Euler-Lagrange equation is
\begin{equation}
  \Derivby t \left(\frac {\chi(t)}{q(t)}-1\right) = \left(\frac
    {\chi(t)}{q(t)}-1\right) - \velocity q(t) \ , \quad \chi(t) =
  e_{q(t)}(\velocity q(t)) \ .
\end{equation}

The Hamilton equations are
\begin{equation}
\left\{\begin{aligned}
\Derivby t \eta(t) &= - \log(1+\eta(t)) + \expectat {q(t)} {\log(1+\eta(t))} + \eta(t) \\
\velocity q(t) &=  \log(1+\eta) - \expectat q {\log(1+\eta)}
\end{aligned}
\right.    
\end{equation}

The conserved energy is
\begin{equation*}
  H(q(t),\eta(t)) = \expectat {q(t)} {\frac{\chi(t)}{q(t)} \log
    \frac{\chi(t)}{q(t)}} = D(\chi(t),q(t)) \quad \text{with} \quad
    \chi(t) = e_{q(t)}(\velocity q(t)) \ . 
\end{equation*}
\end{example}

\section{Covariant Time-Derivative of the KL Legendre Transform}\label{sec:chart-fiber-dev}
We report here the explicit calculation of the covariant time derivative 
\begin{equation*}
    \Derivby t \left( \euler^{\velocity q (t)-K_{q(t)}(\velocity q (t))}-1 \right) \ ,
\end{equation*}
derived by expressing $\Grad_1 K_q(w)$ in the p-chart, contracted with in the direction $\dot u \in \expfiberat p \mu$. 
The calculation gives an explicit example of the formalism adopted while dealing with a \emph{triple covariance}. We have
\begin{multline*}
   \derivby t \scalarat {q(t)} {\euler^{\velocity q(t) - K_q(\velocity q(t))}-1}{\etransport p {q(t)} \dot u(t)} = \\ 
   \derivby t \left(D K_p(u(t)+v(t))[\dot u(t)] - D K_p(u(t))[\dot u(t)]\right) = \\ D^2K_p(u(t)+v(t))\left[\dot u(t), \dot u(t)\right] +  D^2K_p(u(t)+v(t))\left[\dot u(t),\dot v(t)\right]- \\ D^2K_p(u(t))\left[\dot u(t), \dot u(t)\right] = \\      
   \expectat {e_p(u(t)+v(t))} { \etransport p {e_p(u(t)+v(t))}\, \dot u(t) \, \etransport p {e_p(u(t)+v(t))}\, \left(\dot u(t)+\dot v(t)\right) } - \\                   \scalarat {q(t)} {\etransport p {q(t)}\, \dot u(t)}{\etransport p {q(t)}\,\dot u(t)} = \\                         
   \expectat {e_p(u(t))} {\etransport {e_p(u(t)+v(t))}{e_p(u(t))} \etransport p {e_p(u(t)+v(t))} \dot u(t) \mtransport {e_p(u(t)+\
v(t))}{e_p(u(t))} \etransport p {e_p(u(t)+ v(t))} \, \left(\dot u(t) + \dot v(t)\right)}  - \\ 
\scalarat {q(t)} {\etransport p \
{q(t)}\, \dot u(t)}{\etransport p {q(t)}\,\dot u(t)} \\                         \expectat {e_p(u(t))} {\etransport p {e_p(u(t))} \, \dot u(t) \, \mtransport {e_p(u(t)+v(t))}{e_p(u(t))} \etransport p {e_p(u(t)+v(t)\
)} \left(\dot u(t)+\dot v(t)\right) } - \\                                  \scalarat {q(t)} {\etransport p {q(t)}\, \dot u(t)}{\etransport p {q(t)}\,\dot u(t)} = \\                                        
\scalarat {q(t)} {\etransport p {e_p(u(t))}\, \dot u(t)}{\mtransport {e_p(u(t)+v(t))}{e_p(u(t))} \etransport p {e_p(u(t)+v(t))}
 \left(\dot u(t)+\dot v(t)\right)}  - \\                               
 \scalarat {q(t)} {\etransport p {q(t)}\, \dot u(t)}{\etransport p {q(t)}\,\dot u(t)} = \\                                      
 \scalarat {q(t)} {\etransport p {e_p(u(t))}\, \dot u(t)}{\mtransport {e_p(u(t)+v(t))}{e_p(u(t))} \etransport {e_p(u(t))} {e_p(u(t)+v(t))}  \etransport p {e_p(u(t))} \, \Big(\dot u(t)+\dot v(t)\Big)} - \\   \scalarat {q(t)} {\etransport p {q(t)}\, \dot u(t)}{\etransport p {q(t)}\,\dot u(t)} = \\                                     
\scalarat {q(t)} {\etransport p {e_p(u(t))}\, \dot u(t)}{\mtransport {e_p(u(t)+v(t))}{e_p(u(t))} \etransport {e_p(u(t))} {e_p(u(t)+v(t))}  \left(\velocity q(t)+\,\acceleration q(t)\right)} - \\
\scalarat {q(t)} {\etransport p {e_p(u(t))}\, \dot u(t)}{\velocity q(t)}  \ .                                  %  \derivby t \scalarat {q(t)} {\euler^{\velocity q(t) - K_q(\velocity q(t))}-1}{\etransport p {q(t)} \dot u}
   \end{multline*}
   %Alternative we could use that $\frac{e_p(u(t)+v(t))}{e_p(u(t))} \etransport p {e_p(u(t)+v(t))}= \mtransport {e_p(u(t)+v(t))} {e_p(u(t))}\, \etransport p {e_p(u(t)+v(t))}\overwrite{=}{?} \etransport p {e_p(u(t))}$. Hence we can write
   We have 
   \begin{multline*}
      \mtransport {e_p(u(t)+v(t))}{e_p(u(t))} \etransport {e_p(u(t))} {e_p(u(t)+v(t))}  \, \Big(\velocity q(t)+\,\acceleration q(t)\Big) \\
      =  \mtransport {e_p(u(t)+v(t))}{e_p(u(t))}\, \Big(\velocity q(t)+\,\acceleration q(t)- \expectat {e_p(u(t)+v(t))}{\velocity q(t)+\,\acceleration q(t)}\Big)\\
      = \frac{e_p(u(t)+v(t))}{e_p(u(t))} \Big(\velocity q(t)+\,\acceleration q(t)\,-\, \expectat {e_p(u(t)+v(t))}{\velocity q(t)+\,\acceleration q(t)}\Big)
   \end{multline*}

%which can be rewritten as
 % \begin{multline*}
 %  \left(\euler^{\velocity q(t) -K_{q(t)}(\velocity q(t))} \right) \Big( \velocity q(t)+ \acceleration q(t)-\, \scalarat {q(t)} {\euler^{\velocity q(t)-K_{q(t)}(\velocity q(t))}-1}{\velocity q(t)+\,\acceleration q(t)}\Big)= \euler^{\velocity q(t)-K_{q(t)}(\velocity q(t))} -1 \ .  
%\end{multline*}
%In particular, given that
%\begin{multline*}
%  \velocity r(t) = \derivby t \log r(t) = \derivby t (\velocity q(t) -
%  K_{q(t)}(\velocity q(t))) + \derivby t \log q(t) = \\
%  \frac {\ddot q(r)}{q(t)} - \left(\frac{\dot q(t)}{q(t)}\right)^2 -
%  \derivby t
%  K_{q(t)}(\velocity q(t)) + \derivby t \log q(t) =
% \acceleration  q(t) + \velocity q(t) \ . \end{multline*}

Hence, eventually, we get
\begin{equation*}
    \Derivby t \left( \euler^{\velocity q (t)-K_{q(t)}(\velocity q (t))}-1 \right)= \frac{e_p(u(t)+v(t))}{e_p(u(t))} \Big(\velocity q(t)+\,\acceleration q(t)
    - \expectat {e_p(u(t)+v(t))}{\velocity q(t)+\,\acceleration q(t)}\Big) \ .
\end{equation*} 

\section{ODEs Systems for the Figures in the Article}\label{odes}

We provide here the explicit ODEs expressions for the Lagrangian dynamics on the simplex derived in the main text. There are many ways to rewrite the Euler-Lagrange equations on the simplex as a system of ordinary differential equations in $\reals^{2N}$. An immediate option is to introduce the variables $q$ and $v =\velocity q$, in which case the solution will stay in the Grassmannian manifold $\sum_x q(x) v(x) = 0$.
%It holds $\derivby t q(t) = q(t) v(t)$. 
%The acceleration is
%\begin{equation*}
%\acceleration q(t) = \derivby t v(t) - \expectat {q(t)} {\derivby t
%  v(t)} = \dot v(t) + \expectat {q(t)} {v(t)^2} \ .
%\end{equation*}
%In the Hamiltonian setting, we use 

\subsection{Quadratic Lagrangian Systems - Figures 1,2}\label{figure12}
Consider first the Euler-Lagrange equation for the motion of a probability
density within a potential on the statistical manifold (cf. \cref{eq:dynamic-example} in the article). This is given by
\begin{equation}\label{q-potential}
  m {\Derivby t \velocity q(t)} = \frac m 2\left(\velocity q(t)^2 -
     \expectat{q(t)}{\velocity q(t)^2}\right) - \kappa (\log q(t) + \entropyof {q(t)} \ ,
 \end{equation} 
Let us express \cref{q-potential} as a system of ordinary differential equations for $q$ and $\velocity q$. We write $v(t) = \velocity q(t)$ and note that $v(t) = \derivby t \log q(t)$ implies
\begin{equation}\label{66}
\derivby t q(x;t) = q(x;t) v(x;t) \ , \quad x \in \Omega \ .
\end{equation}
In particular, \cref{66}, together with the assumption $\sum_x q(x;t) = N$ implies $\expectat {q(t)} {v(t)} = 0$. Conversely, if $\expectat {q(t)} {v(t)} = 0$, then $\sum_x q(x;t) = \sum_x q(x;0)$.

We have
\begin{equation}
\derivby t v(t) = \derivby t \frac {\dot q(t)}{q(t)} = \frac {\ddot q(t) q(t) - \dot q(t)^2}{q(t)^2} = \frac {\ddot q(t)}{q(t)} - {v(t)^2} \ ,
\end{equation}
where the first term on the rhs is the mixture acceleration of \cref{eq:macc}.
Thereby, via \cref{q-potential}, we have 
\begin{equation}
\derivby t v(t) = \frac {\ddot q(t)}{q(t)} - {v(t)^2} =  \Derivby t v(t)- v(t)^2\\
= -\frac 1 2 v(t)^2 - \expectat{q(t)}{\frac{1}{2} v(t)^2}  + \frac{\kappa}{m} \Grad \entropyof q  \ .
\end{equation}

If we write the vectors $A(q,v) = v^2/2+ \frac \kappa m \logof q $ and $B(q,v) = v^2/2 - \frac \kappa m \logof q$, the system of first order differential equations is
\begin{equation}\label{ds1}
  \left\{
    \begin{aligned}
    \derivby t q(x;t) &= q(x;t)v(x;t) \\
    \derivby t v(x;t) &= - A(q(x;t),v(x;t)) - \frac1N \sum_y q(y;t) B(q(y;t),v(y;t))
  \end{aligned}
  \right. \ , \quad x \in \Omega \ .
\end{equation}

The case of a free-particle Lagrangian system, whose solution is pictured in \cref{fig:free-motion} in the article, is obtained by setting $\kappa=0$. 

\subsection{Kullback-Leibler Lagrangian System - Figure 2}\label{KLode}
Consider now the family of parametrised Lagrangians given in \cref{eq:standard-lagrangian} in the article. We have
\begin{equation}
  L^{a,b,c}(q,w) = c(a^{-1} K_q(aw) - b f(q)) \ ,
\end{equation}
where $a,b,c > 0$ and $f$ is a scalar field on $\maxexpat \mu$.
The associated Euler-Lagrange equation reads
\begin{multline}\label{abcEL}
  a \acceleration
  q(t) + \velocity q(t) =  a^{-1}\left(\euler^{-a\velocity q(t) +
      K_{q(t)}(a \velocity q(t))} - \expectat {q(t)} {\euler^{-a\velocity q(t) +
      K_{q(t)}(a \velocity q(t))}}\right)
- \\
b \left(\euler^{-a\velocity q(t) +
      K_{q(t)}(a \velocity q(t))} \Grad f(q(t)) - \expectat {q(t)} {\euler^{-a\velocity q(t) +
        K_{q(t)}(a \velocity q(t))} \Grad f(q(t)) }\right) \ .
\end{multline}

For $\derivby t q(t) = q(t) v(t)$, the acceleration can be written as 
\begin{equation*}
\acceleration q(t) = \derivby t v(t) - \expectat {q(t)} {\derivby t
  v(t)} = \dot v(t) + \expectat {q(t)} {v(t)^2} \ .
\end{equation*}
The left-hand side of \cref{eq:EL-2} becomes
$a\left(\dot v(t) + \expectat {q(t)}{v(t)^2}\right) + v(t)$, while the
right-hand side becomes
\begin{multline*}
  a^{-1} \expectat {q(t)} {\euler^{av(t)}} \left(\euler^{-av(t)} -
    \expectat {q(t)} {\euler^{-av(t)}}\right) - \\ b \expectat {q(t)}
  {\euler^{av(t)}} \left(\euler^{-av(t)}\Grad f(q(t)) -
    \expectat {q(t)} {\euler^{-av(t)}\Grad f(q(t))}\right) \ .
\end{multline*}

Writing the expected values as sums, we obtain a system of $2N$
ordinary differential equations.  The system could be further reduced
to $2(N-1)$ equations between independent variables by using one of
the possible parametrizations of the Grassmanian manifold.

In \cref{simp2} in the article, we plot a solution of the Euler-Lagrangian equation for a KL Lagrangian parametrised by $a=1$ and $b=0$. The associated ODEs is given by
\begin{equation}\label{abc-ode}
  \left\{
    \begin{aligned}
    \derivby t q(x;t) &= q(x;t)v(x;t) \\
    \derivby t v(x;t) &= - v(x;t) - \frac{1}{N} \sum_y q(y;t)\,
    v(y;t)^2 - \\ &\phantom{=}  \frac{1}{N} \sum_y q(y;t)\, \euler^{v(y;t)}\left(\euler^{-v(x;t)} - \frac{1}{N} \sum_y q(y;t)\, \euler^{-v(y;t)}\right)
  \end{aligned}
  \right. \ .
\end{equation}

\subsection{\emph{Damped} Kullback-Leibler Lagrangian/Hamiltonian Systems under Ideal-Scaling Conditions - Figures 3,4}\label{jordano}

Here, we consider the case of the Euler-Lagrange equations for the damped KL Lagrangian under ideal scaling conditions (cf. \cref{bobo} in the article),
\begin{equation}\label{bobosm}
\acceleration q(t)  +(\euler^{\alpha_t}-\dot{\alpha}_t) \velocity q(t)= -
     \euler^{2\alpha_t+\beta_t} \left(\frac{q(t)}{\chi(t)}\Grad f(q(t)) -
       \expectat {q(t)} {\frac{q(t)}{\chi(t)}\Grad f(q(t))}\right) 
\end{equation}

Again, we have $\velocity q(t)=v(t)$ such that 
\begin{equation}
\derivby t q(x;t) = q(x;t) v(x;t) \ , \quad x \in \Omega \ .
\end{equation}
from which we get
\begin{multline}
\derivby t v(t) = \acceleration q(t) - \expectat {q(t)}{v(t)^2} = - (\euler^{\alpha_t}-\dot{\alpha}_t) v(t)  -  \frac{\euler^{2\alpha_t+\beta_t}\,\Grad f(q(t))}{\euler^{ \euler^{-\alpha_t}v(t)-K_q(\euler^{-\alpha_t}v(t))}} +\\  \expectat {q(t)}{\frac{\euler^{2\alpha_t+\beta_t}\,\Grad f(q(t))}{\euler^{ \euler^{-\alpha_t}v(t)-K_q(\euler^{-\alpha_t}v(t))}}} - \expectat {q(t)}{v(t)^2}
\end{multline}

Then, along with \cite{WibisonoE7351}, the choice of parameters, indexed by $p>0$ ($C>0$, a constant)
%\begin{align}
\begin{equation}
\label{parametr-sm}
\alpha_t= \log p-\log t  \qquad\quad %\\  \nonumber
\beta_t=  p \log t+\log C \qquad\quad %\\ \nonumber
\gamma_t=p \log t, %\nonumber
%\end{align}
\end{equation}
leads to the ODEs system
\begin{equation}\label{p-odee}
  \left\{
    \begin{aligned}
    \derivby t q(x;t) &= q(x;t)v(x;t) \\
    \derivby t v(x;t) &= - \frac{p+1}{t}\, v(t) -\frac{\,Cp^2t^{p-2}\, \Grad f(q(t))}{\euler^{\frac{t}{p}\, v(t)-K_q(\frac{t}{p}\,v(t))}} + \\ & \qquad \qquad \qquad  \frac{1}{N} \sum_x q(x;t)\Big(\frac{\,Cp^2t^{p-2}\, \Grad f(q(t))}{\euler^{\frac{t}{p}\, v(t)-K_q(\frac{t}{p}\,v(t))}}\Big) + \\ \nonumber
&  \qquad \qquad \qquad \frac{1}{N} \sum_x q(x;t)\, v(x;t)^2 \quad \quad x \in \Omega \ .
  \end{aligned}
  \right. 
\end{equation}

A solutions of the Lagrangian system above (for $p=2, C=0.5$ is shown the plot in \cref{simp3}.
\bigskip

Analogously, let us consider the the damped KL Hamiltonian system (cf. \cref{eq:Hamiltondamp} in the article), under ideal scaling condition and with the same parametrisation given in \cref{parametr-sm}.

Given $  \Derivby t {\eta}(t) = \frac{\dot q(t)}{q(t)} {\eta}(t) + \dot{{\eta}}(t)=\velocity q(t) \,{\eta}(t) +  \dot{{\eta}}(t)$,
we get a system of first order equations (see \cref{simp4})
\begin{equation}
  \label{eq:Hamiltondampeta-sm}
  \left \{
  \begin{aligned}
 \dot{{\eta}}(t) &= \euler^{\alpha_t}\, {\eta} - \euler^{\alpha_t+\gamma_t}\left(1+\euler^{-\gamma_t}\,\eta \right)\left(\log(1+\euler^{-\gamma_t}\,{\eta}) - \expectat {q(t)}
    {\log(1+\euler^{-\gamma_t}\,{\eta})}\right) \\ 
    & \quad - \euler^{\gamma_t+\alpha_t+ \beta_t} \Grad f(q) \\
     \dot q(t) & = \euler^{\alpha_t}\, q(t)\,\left(\log(1+\euler^{-\gamma_t}\,{\eta}) - \expectat {q(t)}
  {\log(1+\euler^{-\gamma_t}\,{\eta})}\right) \  .
  \end{aligned}
  \right.
\end{equation}

A solutions of the Hamiltonian ODEs system (for $p=2, C=0.5$) is plot in \cref{simp4} in the  article.

\bibliographystyle{amsplain}
\bibliography{tutto}

\end{document}